\newtheorem{thm}{Theorem}
\newtheorem{rk}{Remark}
\newtheorem{prop}{Proposition}
\newtheorem{clly}{Corollary}
\newtheorem{lemma}{Lemma}
\newtheorem{defi}{Definition}
\newcommand{\R}{{\mathbb{R}}}
\newcommand{\C}{{\mathbb{C}}}
\newcommand{\Z}{{\mathbb{Z}}}
\newcommand{\D}{{\mathbb{D}}}
\newcommand{\en}{{\mbox{int}}}
\newcommand{\ex}{{\mbox{ext}}}
\newcommand{\fil}{{\mbox{Fill}}}
\newcommand{\fix}{{\mbox{Fix}}}
\begin{document}
\title{Existence of periodic points for self maps of the annulus.}

\author{J.Iglesias, A.Portela, A.Rovella and J.Xavier}

\address{J. Iglesias, Universidad de La Rep\'ublica. Facultad de Ingenieria. IMERL. Julio
Herrera y Reissig 565. C.P. 11300. Montevideo, Uruguay}
\email{jorgei@fing.edu.uy }

\address{A. Portela, Universidad de La Rep\'ublica. Facultad de Ingenieria. IMERL. Julio
Herrera y Reissig 565. C.P. 11300. Montevideo, Uruguay }
\email{aldo@fing.edu.uy }

\address{A. Rovella, Universidad de La Rep\'ublica. Facultad de Ciencias. Centro de
Matem\'atica. Igu\'a 4225. C.P. 11400. Montevideo, Uruguay}
\email{leva@cmat.edu.uy}
\
\address{J. Xavier, Universidad de La Rep\'ublica. Facultad de Ingenieria. IMERL. Julio
Herrera y Reissig 565. C.P. 11300. Montevideo, Uruguay }
\email{jxavier@fing.edu.uy }
\begin{abstract}
Consider a continuous surjective self map of the open annulus with degree $d>1$. It is proved that the number of Nielsen classes of periodic points is maximum possible whenever $f$ has a
completely invariant essential continuum. The same result is obtained in negative degree $|d|>1$ and for just forward invariant essential continua, provided that the continuum is locally connected.  We also deal with the problem of wether there is a representative of each Nielsen class in the filled set of the invariant continuum.
Moreover, if the map extends continuously to the boundary of the annulus and both boundary components are either attracting or repelling, the hypothesis on the existence of the invariant continuum is no longer needed for obtaining all the periodic points in the interior of the annulus.
\end{abstract}
\maketitle

\section{Introduction.}

This paper deals with the existence of periodic points for annulus maps. A map is any continuous endomorphism of the open
annulus $A$. The {\em degree} of an annulus map $f$ is the integer $d$ such that $f_*(n)=d.n$, where $f_*$ is the endomorphism of
the fundamental group $\Pi_1(A)\sim \Z$ induced by $f$.
We are particularly interested in the case where the degree of the map has modulus greater than one.

We will consider two different ways of saying that a map has abundance of periodic orbits. The first one is completeness, and is related to Nielsen equivalence.  The second one is the growth rate inequality which will be discussed shortly.   A map of the
annulus is complete if for every $n$ the number of Nielsen classes of fixed points of $f^n$ is equal to
$|d^n-1|$. This is the maximum possible for a map of degree $d$.
Roughly speaking, a continuous map $f:A\to A$ of degree $d$, $|d|>1,$ is said to be complete if it has as many different Nielsen classes of periodic points as the map $z^d$ acting on the circle. Nielsen equivalence and completeness are explained in the next section.

This notion of completeness is related to a great deal of problems on existence of periodic orbits in surface dynamics.  For example, a complete map always satisfies the growth
rate inequality:
\begin{equation*}
\limsup_{n\to \infty}\frac{1}{n}\ln( \#\{\fix(f^n)\}) \geq \ln (d).
\end{equation*}
\noindent

It is an open problem if the rate inequality holds for differentiable maps of degree $d$ acting on the sphere $S^2$ (Problem 3 posed in \cite{shub2}).

Other results related to the existence of periodic points for $C^0$ maps, (not necessarily homeomorphisms) were obtained by Hagopian in \cite{h}, where some conditions are imposed on plane continua in order to obtain the existence of fixed points for any $C^0$ map defined on it.

Existence of recurrence is mostly present in maps with $|d|>1$ because $|d|>1$ implies an 'expansion' on the direction of a simple nontrivial closed curve. This idea will be used frequently throughout the article.

The covering map $z\to z^d$ defined in the annulus $\{z\in \C : 0<|z|<1\}$ is a trivial example of a map without periodic points in the open annulus, and so the mere existence of periodic orbits becomes of interest.  As covering maps lift to homeomorphisms of the plane, the problem also relates to surface homeomorphisms dynamics, in particular with what is called Brouwer Theory. In \cite{iprx} completeness of annulus coverings of degree $d$, $|d|>1$, was proved under the assumption that an essential continuum is preserved.


In this paper we study the same problem for general continuous maps (not necesarily coverings). Some definitions are needed before stating the results.

We say that a map $f: A \to A$ is complete on $K\subset A$ if $f$ is complete and every Nielsen class has a representative in $K$. Whenever $K$ is a compact subset of the (open) annulus, denote by $\fil(K)$ the union of $K$ with the bounded (relatively compact) components of its complement.

\begin{thm}\label{t1} Let $f: A \to A$ be a surjective degree $d$ map of the annulus, and $K\subset A$ an essential continuum such that $f^{-1}(K) = K$.  If $d>1$, then $f$ is complete on $\fil(K)$.
\end{thm}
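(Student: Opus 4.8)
\emph{Proof strategy.} I would first pass to the universal cover $\pi\colon\tilde A\to A$, where $\tilde A$ is a strip (homeomorphic to $\R^{2}$) and the deck group is generated by a translation $T$ in the angular direction. Fix a lift $\tilde f$ of $f$; then $\tilde f\circ T=T^{d}\circ\tilde f$, the lifts of $f^{n}$ are exactly the maps $T^{k}\tilde f^{n}$ with $k\in\Z$, and $T^{-m}(T^{k}\tilde f^{n})T^{m}=T^{k+m(d^{n}-1)}\tilde f^{n}$, so the $T$-conjugacy classes of lifts of $f^{n}$ are in bijection with $\Z/(d^{n}-1)\Z$; there are $|d^{n}-1|$ of them, which is positive because $d>1$. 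Since every nonempty fixed point class of $f^{n}$ is the projection of $\fix$ of such a lift, the assertion ``$f$ is complete on $\fil(K)$'' is equivalent to: for every $n\ge 1$ and every $k$, the set $\fix(T^{k}\tilde f^{n})$ meets $\hat K:=\pi^{-1}(\fil(K))$, which is precisely $\tilde A$ with its two unbounded complementary regions deleted.

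Next I would record what complete invariance gives topologically. Surjectivity of $f$ together with $f^{-1}(K)=K$ yields $f(K)=K$, $f^{-1}(\fil K)=\fil K$ and $f(\fil K)\subseteq\fil K$; lifting these, $\tilde f(\hat K)\subseteq\hat K$ and $\tilde f^{-1}(\hat K)=\hat K$. Hence $\hat K$ is a $T$-invariant continuum that separates the strip into its two complementary domains $\tilde W^{\pm}$ (the liftings of the two ends of $A$); since there are no other complementary components, each $\tilde W^{\pm}$ is a simply connected open set, and $\tilde f$ either preserves each of them (when $f$ preserves the ends of $A$) or interchanges them. For fixed $n,k$ set $g:=T^{k}\tilde f^{n}$, so $g\circ T=T^{d^{n}}\circ g$, $g(\hat K)\subseteq\hat K$, $g^{-1}(\hat K)=\hat K$, and $g$ preserves or interchanges $\tilde W^{\pm}$.

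The heart of the proof is to show $\fix(g)\cap\hat K\neq\emptyset$. Suppose not; then $v:=g-\mathrm{id}$ is a continuous map of $\hat K$ into $\R^{2}\setminus\{0\}$, and in coordinates in which $T$ is translation by $(0,1)$ one has $v(Tp)=v(p)+(0,d^{n}-1)$, so the angular component of $v$ gains $d^{n}-1$ (of modulus at least $1$) per period while the radial component $v_{r}$ is $T$-periodic. Two ingredients would then be combined. First, invariance of $\hat K$ constrains $v_{r}$: as $\hat K/T$ is compact, $\hat K$ has a topmost and a bottommost point $p^{+},p^{-}$, and $g(p^{\pm})\in\hat K$ forces $v_{r}(p^{+})\le 0\le v_{r}(p^{-})$, so $v_{r}$ vanishes somewhere on the continuum $\hat K$, where the angular component of $v$ is therefore nonzero. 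Second, the relation $g\circ T=T^{d^{n}}\circ g$ with $|d^{n}|\ge 2$ forces the angular component of $v$ to wind: closing up one $T$-period of the frontier $\partial\tilde W^{+}$ by an arc inside the completely invariant, simply connected domain $\tilde W^{+}$ and computing the winding number of $v$ about $0$ along the resulting loop, one is led to a definite nonzero value, incompatible with the radial behaviour just found. This contradiction gives a fixed point of $g$ in $\hat K$; letting $n$ and $k$ vary, every Nielsen class of every $f^{n}$ has a representative in $\fil(K)$, i.e.\ $f$ is complete on $\fil(K)$. The subcase in which $f$ interchanges the ends of $A$ (so $g$ does, for $n$ odd) is handled identically, using only $g(\hat K)\subseteq\hat K$.

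The main obstacle is exactly this last fixed point statement for $g$. When $f$ is a covering, $\tilde f$ is a homeomorphism and the argument can be run with plane (Brouwer) translation theory, as in \cite{iprx}. For a general continuous $f$ the lift $g$ is not injective, so Brouwer theory is unavailable and the winding/degree computation must be carried out directly on the invariant continuum $\hat K$, which need not be locally connected and whose frontiers with $\tilde W^{\pm}$ need not be arcs; reconciling the ``radial vanishing'' coming from invariance of $\hat K$ with the ``angular winding'' coming from $|d|>1$, on such a set, is the delicate technical point.
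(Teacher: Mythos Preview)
Your reduction via Proposition~\ref{comp} is correct and is exactly how the paper begins: it suffices to show that every lift $g=T^{k}\tilde f^{n}$ has a fixed point in $\pi^{-1}(\fil(K))$. The difficulty, as you yourself say, is the fixed-point statement for $g$, and here your proposal is a heuristic rather than a proof. Two specific problems:

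\begin{itemize}
\item Your winding computation is not well-posed. You want to ``close up one $T$-period of the frontier $\partial\tilde W^{+}$ by an arc inside $\tilde W^{+}$'' and compute the winding of $v=g-\mathrm{id}$ along the resulting loop. But $\partial\tilde W^{+}$ need not be a curve at all (this is exactly the non--locally-connected case), so there is no ``one $T$-period of the frontier'' to parametrize. You acknowledge this at the end, but without a substitute the argument has no content.
\item The claimed incompatibility between ``$v_{r}$ vanishes somewhere on $\hat K$'' and ``the winding of $v$ along some loop is nonzero'' is not explained and, as stated, is not a contradiction. A nonvanishing vector field on a loop can have nonzero winding and still have its radial component vanish at points of the loop; you would need to connect these two facts through the topology of $\hat K$, which you do not do.
\end{itemize}

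The paper resolves exactly this obstacle with Carath\'eodory theory. In $A_{S}\cup\{S\}$ (conformally a disc) one builds, for any small $\delta$, a finite chain of crosscuts $c_{1},\ldots,c_{k}$ of diameter $<\delta$ whose concatenation $\gamma=c_{1}\cdots c_{k}$ is an essential simple closed curve with $f(c_{i})\cap c_{i}=\emptyset$; the same is done in $A_{N}$. These curves play the role of your ``frontier of $\tilde W^{\pm}$'' but are honest simple closed curves arbitrarily close to $\partial A_{S}$, $\partial A_{N}$. Lifting, one concatenates $2m+1$ consecutive lifts of $\gamma$ on each side with two ``vertical'' arcs $V_{\pm m}$ to form a simple closed curve $\beta$ in the cover. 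The relation $g\circ T=T^{d^{n}}\circ g$ with $d>1$ lets one choose $m$ so that $g(V_{\pm m})$ lies strictly to the right/left of $V'_{\pm m}$; on the crosscut pieces one uses Lemma~\ref{l2} to homotope $g$ to a map $g'$ with $g'(\beta)\cap\beta=\emptyset$, counting the index change, and then reads off $I_{g'}(\beta)=1$, hence $I_{g}(\beta)\neq 0$. This is the Lefschetz-index computation that replaces your winding heuristic, and it genuinely uses $d>1$ (not merely $|d|>1$): for $d<-1$ the orientation of the $V_{\pm m}$-displacement reverses and the count breaks down, which is why the paper treats only $d>1$ here and leaves $d<-1$ open in this generality. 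The end-interchanging case is handled separately (and more simply) by a rectangle argument (Lemma~\ref{rectangulo}).

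In short: your framework is right, but the missing idea is the Carath\'eodory crosscut construction that produces an actual curve near $\partial A_{S}$ and $\partial A_{N}$ on which a Lefschetz index can be computed; without it your ``winding'' step cannot be made rigorous.
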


The ingredientes for the proof of this Theorem are Lefschetz Theory (to obtain fixed points enclosed by curves with nonzero index), Caratheodory Theory (constructions of crosscuts with determined properties) and Nielsen Theory (existence of fixed points in every Nielsen class is assured by proving that every lift of $f$ to its universal covering has a fixed point).

The proof obtained fails for $d<-1$ and it is easy to construct counterexamples of the proof. It is an open problem if the assertion of the Theorem is true also for negative values of $d$.

The hypothesis $f^{-1}(K)=K$ is also important in the proof.
Assuming that $K$ is locally connected, this assumption and the surjectivity of $f$ can be dropped, and a satisfactory conclusion is obtained.

\begin{thm}\label{t2}
Let $f: A \to A$ be a degree $d$ map of the annulus, and $K\subset A$ a locally connected essential continuum such that $f(K) \subset K$.
If $|d|>1$, then $f$ is complete on $\fil(K)$.
\end{thm}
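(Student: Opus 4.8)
\noindent The plan is to reduce, through Nielsen theory, to a statement about fixed points of lifts to the universal cover, and then to run — with the local connectedness of $K$ playing the role that complete invariance played in Theorem~\ref{t1} — the Carath\'eodory--Lefschetz scheme behind that theorem.

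\textbf{Step 1 (Nielsen reduction).} Let $p\colon\tilde A\to A$ be the universal covering, $T$ a generator of the deck group, and $\tilde f$ a lift of $f$, so $\tilde f T=T^{d}\tilde f$. Every lift of $f$ has the form $T^{k}\tilde f$, and $T^{k}\tilde f$ is conjugate to $T^{k'}\tilde f$ by a deck transformation exactly when $k\equiv k'\pmod{d-1}$; hence there are $|d-1|$ conjugacy classes of lifts, they are in bijection with the Nielsen classes of $\fix(f)$, and a class has a representative in $S\subset A$ iff the corresponding lift has a fixed point in $p^{-1}(S)$. The same holds for $f^{n}$, which has degree $d^{n}$ with $|d^{n}|>1$ and still has $K$ as a locally connected, essential, forward invariant continuum. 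So it suffices to prove: \emph{every lift of $f$ has a fixed point in $p^{-1}(\fil(K))$.}

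\textbf{Step 2 (geometry and prime ends).} Since $K$ is essential, $\fil(K)$ is a Peano continuum and $A\setminus\fil(K)$ has exactly two components $U_{+},U_{-}$, each essential, accumulating on one end of $A$, with $\partial U_{\pm}\subset K$; by Torhorst's theorem each $\partial U_{\pm}$ is a locally connected continuum. Hence Carath\'eodory's theory applies to $U_{\pm}$: the prime end compactification $\widehat{U_{\pm}}$ is a closed annulus, prime ends have point impressions, and the boundary map extends to a continuous surjection $\Phi_{\pm}\colon\widehat{U_{\pm}}\to\overline{U_{\pm}}$ carrying the prime end circle $\mathcal P_{\pm}$ onto $\partial U_{\pm}$. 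Moreover $\Phi_{\pm}|_{\mathcal P_{\pm}}$ is an essential loop in $A$, so $f\circ\Phi_{\pm}|_{\mathcal P_{\pm}}$ is a degree $\pm d$ loop contained in $K$ (using $f(\partial U_{\pm})\subset f(K)\subset K$). Note that $f(\fil(K))\subset\fil(K)$ may fail, so $f$ does \emph{not} push directly to $\widehat{U_{\pm}}$; this is precisely where Theorem~\ref{t1} used $f^{-1}(K)=K$, and where we must instead rely on the tameness of the prime end structure guaranteed by local connectedness.

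\textbf{Step 3 (locating the fixed points) and the main obstacle.} Fix a lift $\tilde f$ and set $\tilde U_{\pm}=p^{-1}(U_{\pm})$. Lifting the crosscut constructions of Theorem~\ref{t1}, and using that $\partial\tilde U_{\pm}\subset p^{-1}(K)$ is locally connected and $p^{-1}(\fil(K))$ is locally arcwise connected, one builds $T$-equivariant families of crosscuts of $\tilde U_{\pm}$ with controlled $\tilde f$-images and concatenates suitable ones into simple closed curves $\Gamma\subset\tilde A$ on which $\tilde f$ is fixed point free, of fixed point index $\pm1$, whose enclosed region is controlled so that the fixed point delivered by the Lefschetz index theorem lies in $p^{-1}(\fil(K))$; when $\fil(K)$ has empty interior the construction degenerates to an analysis of the degree $\pm d$ loop $f\circ\Phi_{\pm}|_{\mathcal P_{\pm}}$ in $K$ and still produces the required fixed point on $p^{-1}(K)$. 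Carrying this out for representatives of each of the $|d-1|$ classes, and then for every $f^{n}$, finishes the proof. The hard part is Step~3 for $d<-1$: without $f^{-1}(K)=K$ a lift need not preserve the two sides of $p^{-1}(K)$ or the ends, and for negative degree it reverses the cyclic order of prime ends, so the crosscut construction and the resulting index computation have to be made to depend only on the degree $\pm d$ action on the prime end circles together with the local connectedness of $\partial U_{\pm}$, rather than on any orientation or invariance property of $f$ — and one must still keep the enclosed fixed point inside $\fil(K)$.
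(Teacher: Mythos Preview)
Your Step~1 is correct and matches the paper's use of Proposition~\ref{comp}. Step~2 is accurate bookkeeping. The problem is Step~3: it is not a proof but a programme, and you yourself flag the hard part as unresolved. Concretely, you never actually build the crosscuts, never say why their $\tilde f$-images behave as needed without $f^{-1}(K)=K$, and never do the index computation; the final paragraph is a description of the obstacles rather than a solution to them. Since the whole content of the theorem lies in Step~3, this is a genuine gap.

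More importantly, you are working much harder than necessary. The paper's argument avoids crosscuts entirely and uses local connectedness in a single, decisive way: because $\partial U_{\pm}$ is locally connected, the Riemann map of $U_{\pm}$ extends continuously to the boundary, and hence there is a deformation retract $r$ of a neighbourhood $W$ of $K'=\Pi^{-1}(K)$ onto $K'$ with $d(r,\mathrm{id})<\varepsilon/2$. Now pick essential simple closed curves $\alpha\subset A_N$ and $\beta\subset A_S$ so close to $K$ that $\alpha\cup f(\alpha)\cup\beta\cup f(\beta)$ lies in the $\varepsilon$-neighbourhood of $K$; lift to lines $\alpha',\beta'$ bounding a strip $A'_m\supset K'$. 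If $F$ had no fixed point on $\fil(K')$, choose $\varepsilon$ so that $d(x,F(x))>\varepsilon$ there; then $r\circ F$ is defined on $\alpha'\cup\beta'$, carries both into $K'\subset A'_m$, and has the same fixed points as $F$ on $\fil(K')$. Because $|d|>1$, one finds vertical segments at $x_0<x_1$ (inside $A'_m$) that $r\circ F$ moves strictly to the left and right (if $d>1$) or across each other (if $d<-1$). The resulting ``rectangle'' $\gamma$ then satisfies the hypotheses of Corollary~\ref{torcido} (for $d>1$) or Corollary~\ref{torcido2} (for $d<-1$), giving $I_{r\circ F}(\gamma)=\mp 1$ and hence a fixed point of $F$ in $\fil(K')$.

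This is both shorter and treats $d<-1$ on the same footing as $d>1$, precisely because the retraction forces $r\circ F(\alpha')$ and $r\circ F(\beta')$ into the region \emph{between} $\alpha'$ and $\beta'$ regardless of what $f$ does to $A_N$ and $A_S$; no invariance of sides, no prime-end orientation, and no crosscut bookkeeping are needed. Your attempt to port the Theorem~\ref{t1} machinery misses this simplification: local connectedness is not merely making the prime-end picture tamer, it is making the crosscut picture unnecessary.
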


These statements imply the existence of periodic points in the set $\fil(K)$ and there are examples where the periodic points are not contained in $K$. Posing some conditions on the 'behaviour at infinity' of the map $f$, completeness is obtained without any other assumption. To state these conditions, note that the annulus can be compactified with two points, called ends of $A$.

\begin{thm}\label{t3}
Let $f: A \to A$ be a degree $d$ map of the annulus, where $|d|>1$. Assume that $f$ extends continuously to the ends of $A$. Each one of the following conditions imply that $f$ is complete.
\begin{enumerate}
\item
Both ends of $A$ are attracting.
\item
Both ends of $A$ are repelling.
\end{enumerate}
\end{thm}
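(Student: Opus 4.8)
As in Theorems~\ref{t1} and~\ref{t2}, the idea is to reduce the statement to showing that every lift to the universal cover $\widetilde A=\R\times(0,1)$ of every iterate $f^n$ has a fixed point. Let $T(x,y)=(x+1,y)$ generate the deck group; a lift $\phi$ of $f$ satisfies $\phi T=T^{d}\phi$, so, writing $\phi=(\phi_1,\phi_2)$, we get $\phi_1(x+1,y)=\phi_1(x,y)+d$ and $\phi_2(x+1,y)=\phi_2(x,y)$. Since $f^n$ again extends to the ends of $A$, has degree $d^n$ of modulus $>1$, and has both ends attracting (respectively repelling) whenever $f$ does, and since the lifts of $f^n$ are the maps $T^k\widetilde f^{\,n}$ with $\widetilde f^{\,n}T=T^{d^n}\widetilde f^{\,n}$, it is enough to prove: \emph{under the hypotheses of the theorem, every lift $\phi$ of $f$ has a fixed point in $\widetilde A$.} Granting this for every $n$ realizes all $|d^n-1|$ Nielsen classes of $\fix(f^n)$, which is completeness.

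The hypothesis on the ends is used first to confine $\fix(\phi)$. Let $\hat f\colon\hat A\to\hat A$ be the extension of $f$ to the two--point (end) compactification $\hat A$ of $A$, a topological sphere; $\hat f$ fixes the ends $e_0,e_1$ when $d>0$ and interchanges them when $d<0$, and by hypothesis these form an attracting (respectively repelling) fixed point or periodic orbit of $\hat f$. Hence there are neighbourhoods $N_i$ of $e_i$ whose lifts $\widetilde N_i$ are $T$--invariant, contain a collar $\{y<\varepsilon\}$, respectively $\{y>1-\varepsilon\}$, and contain no point of $A$ fixed by any iterate of $f$ --- such a point would lie in $\bigcap_m\hat f^{\,m}(\overline{N_i})$ in the attracting case and in $\bigcap_m\hat f^{-m}(N_i)$ in the repelling case, and both intersections equal $\{e_i\}$, which is not in $A$. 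Therefore $\fix(\phi)$ lies in the interior of the middle region $U=\widetilde A\setminus(\widetilde N_0\cup\widetilde N_1)$, which projects to a compact essential subannulus of $A$ and is bounded away from both ends. Moreover $\phi_1(x,y)-dx$ is $1$--periodic in $x$, hence bounded on $U$, so a fixed point of $\phi$ obeys $|d-1|\,|x|\le C$; as $|d|>1$ this bounds $\fix(\phi)$ in the $x$--direction too. So $\fix(\phi)$ is a compact subset of $\operatorname{int}U$, the fixed point index $\operatorname{ind}(\phi;\operatorname{int}U)$ is defined, and a nonzero value produces a fixed point of $\phi$.

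To evaluate the index I would use an admissible homotopy. Deform $\phi$ through $\phi_t=\bigl((1-t)\phi_1+t(dx+k),\,(1-t)\phi_2+th\bigr)$, where $h\colon(0,1)\to(0,1)$ is a model reproducing the vertical dynamics at the ends: for $d>0$ with attracting ends, $h$ preserves the end collars, $h(y)<y$ near $0$, $h(y)>y$ near $1$, so $h$ has a single fixed point $y_*$ with $h'(y_*)>1$; for repelling ends these inequalities reverse and $h'(y_*)<1$; for $d<0$, $h$ is decreasing with $h(0^+)=1^-$, $h(1^-)=0^+$, and $|h'(y_*)|<1$ (respectively $>1$) according as the end orbit is attracting (respectively repelling). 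If $h$ and the $N_i$ are chosen compatibly, each $\phi_t$ still satisfies $\phi_tT=T^d\phi_t$, maps $\widetilde A$ into itself, retains the trapping at the ends (so $\fix(\phi_t)$ stays in a fixed compact subset of $\operatorname{int}U$) and has $\phi_1$--component of the form $dx+(\text{bounded})+k$; hence the index is constant along the homotopy. For the endpoint map $\Psi(x,y)=(dx+k,h(y))$ there is exactly one fixed point, $P=\bigl(-k/(d-1),\,y_*\bigr)$, of index $\operatorname{sign}\bigl((1-d)(1-h'(y_*))\bigr)$; running through the four cases (ends attracting/repelling, $d>1$ or $d<-1$) this equals $+1,-1,+1,+1$ respectively, and in particular is never $0$. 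Therefore $\operatorname{ind}(\phi;\operatorname{int}U)\neq0$, $\phi$ has a fixed point, and the theorem follows.

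The step I expect to be the main obstacle is making the homotopy genuinely admissible: extracting from ``the ends are attracting (respectively repelling)'' a \emph{single} pair of neighbourhoods $N_0,N_1$, together with a model $h$, that trap all the maps $\phi_t$ and keep $\fix(\phi_t)$ uniformly away from the ends --- the hypothesis only provides, a priori, forward--invariant neighbourhoods of $\hat f$ that need not be round collars, so one has to choose everything compatibly along the deformation. A secondary, bookkeeping, point is the case $d<0$, where the two ends form a $2$--periodic orbit of $\hat f$ rather than two fixed points; this affects only the shape of $h$ and the sign count. (For $d>1$ with attracting ends one could instead delete the two basins to get an essential compact $f^{-1}$--invariant set and try to apply Theorem~\ref{t1}, but that set need not be connected; the argument above is both more direct and handles $d<-1$ and the repelling case uniformly.)
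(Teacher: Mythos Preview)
Your overall strategy---show every lift $\phi$ of every iterate has a fixed point by exhibiting a nonzero fixed-point index---is the same as the paper's, but the execution differs. The paper does not homotope $\phi$ to a product map. Instead it builds a concrete ``box'' $\Gamma_m$ in the universal cover, bounded by lifts $\alpha',\beta'$ of two essential simple closed curves near the ends together with two translates $V_{\pm m}$ of a transverse arc, observes that $F$ pushes each side of $\Gamma_m$ in a definite direction (e.g.\ all four sides outward for attracting ends with $d>1$; horizontal sides inward and vertical sides outward for repelling ends with $d>1$), and reads off $I_F(\Gamma_m)=\pm1$ directly from Lemma~\ref{rectangulo} and Corollaries~\ref{torcido}, \ref{torcido2}. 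No straight-line homotopy, no model map $h$, and no linearised index computation are used.

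The gap you flag is genuine, not just bookkeeping. ``Attracting end'' yields only a \emph{topological} forward-invariant neighbourhood, and nothing prevents $\phi_2(x,y)>y$ at individual points of that neighbourhood; so the convex combination $(1-t)\phi_2+th$ can very well have a fixed point with $y$ arbitrarily close to $0$ for some intermediate $t$. The repair is exactly the paper's first move: choose essential simple closed curves $\alpha$ near $N$ and $\beta$ near $S$ with $f(\alpha)$ on the $N$-side of $\alpha$ and $f(\beta)$ on the $S$-side of $\beta$ (attracting case; reverse for repelling), and change coordinates so that $\alpha'=\{y=b\}$ and $\beta'=\{y=a\}$. Then $\phi_2(x,a)<a$ and $\phi_2(x,b)>b$ for all $x$; if $h$ is chosen with $h(a)<a$ and $h(b)>b$, every $\phi_t$ is fixed-point-free on $\{y=a\}\cup\{y=b\}$, your bound $|x|\le C/|d-1|$ handles the vertical sides, and the homotopy is admissible on the box $\{|x|<M,\ a<y<b\}$. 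Once repaired, your argument reconstructs the paper's $\Gamma_m$ and then adds an extra homotopy-to-product step to compute its index, where the paper computes that index directly.

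One small correction: the sign of $d$ does \emph{not} determine whether $\hat f$ fixes or swaps the ends (for instance $z\mapsto\bar z^{\,2}$ has degree $-2$ and fixes both $0$ and $\infty$). The paper reads ``both ends attracting'' as both ends being \emph{fixed} and attracting---its proof begins by setting $f(N)=N$, $f(S)=S$---so for $d<-1$ your model $h$ should still be increasing, giving index $\operatorname{sign}\bigl((1-d)(1-h'(y_*))\bigr)=-1$ rather than $+1$. This does not affect the conclusion, since either value is nonzero.
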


See sections 4,5 and 6 where completeness is obtained for maps of the annulus under different sets of hypothesis.
In Section 2 the notion of Nielsen class is explained, in Section 3 the index and Lefschetz fixed point Theorem are introduced without proofs, and some of the lemmata needed in the proofs
of the theorems are included.
Section 4 is devoted to the proof of Theorem \ref{t1}.
In Section 5 the proofs for locally connected $K$ are given, and the last Section contains two simple applications of these results to maps of the sphere.

\section{Nielsen Theory.}

Two fixed points $p$ and $q$ of a map $f:A\to A$ are said {\em Nielsen equivalent} if there exists a curve $\gamma$ joining $p$ to $q$ such that $f\gamma$ is homotopic to
$\gamma$ relative to its endpoints. This is an equivalent relation whose classes are called Nielsen classes. Two fixed points that are very close are equivalent.
Of course the existence of a continuum of fixed points gives just one class of Nielsen equivalent fixed points. If $|d|>1$, then the map $p_d(z)=z^d$ on the annulus $A=\C\setminus\{0\}$ has exactly $|d-1|$ fixed points, and also $|d-1|$ Nielsen classes.

It is not difficult to see that a degree $d$ map of the annulus cannot have more than $|d-1|$ classes of fixed points.
A map $f$ is {\em complete} if for every $n$ the number of Nielsen classes of fixed points of $f^n$ is equal to that of the map $p_d$.
So, asking for a map $f$ to be complete is to ask that is has the maximum possible (non artificial) periodic orbits.  The following proposition is a standard result in Nielsen theory (see Jiang's book \cite{jiang} for further information on the subject) and will be used to prove all the results throughout this paper.  Alternatively, you can see Corollary 1 in \cite{iprx}.

\begin{prop}\label{comp} A degree $d$ map of the annulus has $|d-1|$ Nielsen classes of fixed points if and only if any lift of $f$ to the universal covering of the annulus has a fixed point.\end{prop}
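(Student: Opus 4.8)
The plan is to establish both directions by passing to the universal cover $\widetilde{A}$, which we identify with $\{(x,y) : y \in \R\}$ so that the deck transformation group is generated by the unit vertical translation $T(x,y) = (x,y+1)$, and a lift $F$ of a degree $d$ map satisfies $F \circ T = T^d \circ F$. Recall the standard dictionary of Nielsen theory: fixed points $p,q$ of $f$ are Nielsen equivalent if and only if, for some (hence any) choice of lift $F$ of $f$ and some lifts $\tilde p, \tilde q$ of $p,q$ with $F(\tilde p) = \tilde p$ and $F(\tilde q) = \tilde q$, the points $\tilde p, \tilde q$ lie in the projection of the fixed-point set of one and the same lift $F$. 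Thus Nielsen classes of $\fix(f)$ are in bijection with those lifts $F'$ of $f$ that have a nonempty fixed-point set, taken modulo conjugacy by deck transformations. So the first step is to enumerate the lifts of $f$ and the conjugacy action on them.

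The second step is that combinatorial count. Fixing one lift $F$, every other lift is $T^k \circ F$ for a unique $k \in \Z$, and conjugation by $T^j$ sends $T^k \circ F$ to $T^j (T^k F) T^{-j} = T^{k + j - jd} \circ F = T^{k - j(d-1)} \circ F$. Hence the set of lifts, identified with $\Z$ via $k$, carries the action $k \mapsto k - j(d-1)$ of $\Z$, and the orbit space is $\Z / (d-1)\Z$, which has exactly $|d-1|$ elements. This shows there are at most $|d-1|$ Nielsen classes always, and that the number equals $|d-1|$ precisely when every one of these $|d-1|$ conjugacy classes of lifts contains a lift with a fixed point — equivalently, since conjugate lifts have homeomorphic (indeed deck-translated) fixed-point sets, when \emph{every} lift of $f$ has a fixed point. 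That is exactly the asserted equivalence, so the proof is essentially this bookkeeping once the Nielsen-class/lift correspondence is in hand.

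The main obstacle, and the one point deserving care rather than citation, is justifying the correspondence between Nielsen classes and conjugacy classes of lifts-with-fixed-points: namely that $\tilde p, \tilde q$ are fixed by the same lift iff the projections are joined by a path $\gamma$ with $f\gamma \simeq \gamma$ rel endpoints. The forward direction is immediate by projecting a path in $\widetilde A$ from $\tilde p$ to $\tilde q$ and using $F(\tilde p)=\tilde p$, $F(\tilde q)=\tilde q$ to see $F$ moves the lifted path to another path with the same endpoints, hence $f\gamma \simeq \gamma$ rel endpoints. Conversely, given such a $\gamma$, lift it starting at $\tilde p$; the homotopy $f\gamma \simeq \gamma$ rel endpoints lifts to show the lift of $f\gamma$ starting at $F(\tilde p) = \tilde p$ ends at the same point as the lift of $\gamma$ starting at $\tilde p$, which is $\tilde q$ — forcing the lift $F$ chosen to fix $\tilde p$ to also fix $\tilde q$. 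Since the paper explicitly permits citing Jiang's book \cite{jiang} or Corollary 1 of \cite{iprx}, I would present the lift-enumeration count in full and invoke the reference for this last correspondence, noting that it is the content of the standard theory.
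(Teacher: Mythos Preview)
Your argument is correct and is precisely the standard Nielsen-theoretic proof: enumerate the lifts as $T^k\circ F$, compute that conjugation by deck transformations acts on the index $k$ by $k\mapsto k-j(d-1)$, so there are exactly $|d-1|$ conjugacy classes of lifts, and then invoke the lift/Nielsen-class correspondence to conclude. The paper itself does not prove this proposition at all; it simply records it as a standard fact and refers the reader to Jiang's book \cite{jiang} or to Corollary~1 of \cite{iprx}, so there is no ``paper's own proof'' to compare with---your write-up is essentially what those references contain.

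One cosmetic remark: your sentence stating the lift/Nielsen correspondence is slightly circular as phrased (you assume $F(\tilde p)=\tilde p$ and $F(\tilde q)=\tilde q$ and then conclude they lie in the fixed set of ``one and the same lift $F$''). The clean statement is simply that $p$ and $q$ are Nielsen equivalent iff some single lift $F$ fixes suitable lifts of both, which is exactly what you go on to prove in your final paragraph; just tighten the wording there.
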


Note that in order to prove completeness of a map $f$ satisfying the hypothesis of Theorem \ref{t1}, Theorem \ref{t2} or Theorem \ref{t3}, it suffices to show that any lift of $f$ has a fixed point.
Indeed, note that in any case, every iterate $f^k$ of $f$ satisfies the same hypothesis as $f$. It follows that $f^k$ has $|d^k-1|$ classes of fixed points.

\section{The index.}\label{3}
Let $\Pi:\R^2\to \R^2\setminus\{0\}$, defined by $\Pi(z)=\exp(2\pi i z)$ denote the universal covering of the punctured plane.
The index of a plane closed curve $\gamma$ defined in an interval $I=[a,b]$ such that $\gamma(t)\neq 0$ for every $t$, is defined as the first coordinate of $\gamma'(b)-\gamma'(a)$, where $\gamma'$ is any lift of $\gamma$ under $\Pi$: that is, $\gamma': [a,b]\to \R^2$ is a curve such that
$\Pi(\gamma'(t))=\gamma(t)$ for every $t$. This does not depend on the choice of the lift. Note that the index is an integer.

We abuse notation and make no difference between the curve and its trace in the plane.

\begin{defi}
\label{index}
Let $\gamma$ be a closed curve and $f$ a continuous map defined on $\gamma$ and without fixed points on $\gamma$. Define the Lefschetz index of $f$ in $\gamma$ as the index of the closed curve $f\circ\gamma-\gamma$. This index will be denoted by $I_f(\gamma)$.
\end{defi}

Let $\gamma$ be a simple closed curve in the plane, so that its complement has two connected components, one is bounded and denoted $\en(\gamma)$ and the other unbounded, denoted $\ex(\gamma)$.

The curve $\gamma$ is positively oriented if the index of $t\to \gamma(t)-p_0$ is $1$ whenever $p_0$ is a point that belongs to $\en(\gamma)$.
If $\gamma$ is positively oriented, then $\en(\gamma)$ is located to the left of $\gamma$.

\noindent
{\bf Lefschetz fixed point Theorem.} {\em
Let $f$ be a continuous self-map of the plane.
\begin{enumerate}
\item
If $\gamma$ is a simple closed curve such that $I_f(\gamma)\neq 0$, then $f$ has a fixed point in the  bounded component of $\R^2\setminus\gamma$.
\item
Let $A$ be an annulus in the plane whose boundary components are positively oriented curves $\gamma_0$ and $\gamma_1$, and assume that $I_f(\gamma_0)\neq I_f(\gamma_1)$, then $f$ has a fixed point in $A$.
\end{enumerate}
}

It is our prupose to use the above result to prove existence of periodic points. We will need some techniques for calculating indexes of curves. The next assertion follows by continuity of $I_f(\gamma)$ on $f$ and $\gamma$ whenever the index is defined.

\begin{lemma}
\label{l1}
Assume that $\gamma$ is a closed curve and that $f_t$ is a homotopy. If no $f_t$ has a fixed point in $\gamma$, then $t\to I_{f_t}(\gamma)$ is constant.

Moreover, if $\gamma_t$ is a free homotopy and $f$ has no fixed points on any $\gamma_t$, then $t\to I_f(\gamma_t)$ is constant.
\end{lemma}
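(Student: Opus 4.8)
The plan is to reduce both statements to the classical fact that the index of a closed curve in $\R^2\setminus\{0\}$ (i.e. the increment of the first coordinate of a $\Pi$-lift) is invariant under homotopies taking place inside $\R^2\setminus\{0\}$, together with the observation that this index is integer valued, so that a parametrized family of indices which is locally constant on an interval must be constant.

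First I would unwind Definition \ref{index}: $I_{f_t}(\gamma)$ is by definition the index of the closed curve $\beta_t:=f_t\circ\gamma-\gamma$ (it is a closed curve since $\gamma(a)=\gamma(b)$ forces $\beta_t(a)=\beta_t(b)$), and the hypothesis that $f_t$ has no fixed point on $\gamma$ says precisely that $\beta_t$ takes values in $\R^2\setminus\{0\}$, so $I_{f_t}(\gamma)\in\Z$ is defined for every $t$. Now fix a parameter $t_0$. Since $\gamma$ is defined on a compact interval and $(t,s)\mapsto f_t(\gamma(s))$ is continuous, the number $\delta:=\min_s|\beta_{t_0}(s)|$ is strictly positive, and there is $\varepsilon>0$ such that $|f_t(\gamma(s))-f_{t_0}(\gamma(s))|<\delta$ for all $s$ whenever $|t-t_0|<\varepsilon$. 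For such $t$, the straight-line homotopy $u\mapsto \beta_{t_0}+u(\beta_t-\beta_{t_0})$, $u\in[0,1]$, never meets the origin, because $|\beta_{t_0}(s)+u(\beta_t(s)-\beta_{t_0}(s))|\ge\delta-u|\beta_t(s)-\beta_{t_0}(s)|>\delta-\delta=0$; hence $\beta_t$ and $\beta_{t_0}$ are homotopic in $\R^2\setminus\{0\}$ and therefore $I_{f_t}(\gamma)=I_{f_{t_0}}(\gamma)$. This shows $t\mapsto I_{f_t}(\gamma)$ is locally constant, hence constant on the (connected) parameter interval of the homotopy, proving the first assertion.

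For the second assertion the argument is identical with $\beta_t$ replaced by $f\circ\gamma_t-\gamma_t$: the fact that $\gamma_t$ is a (free) homotopy and $f$ is continuous makes $(t,s)\mapsto f(\gamma_t(s))-\gamma_t(s)$ continuous, and the hypothesis that $f$ has no fixed point on any $\gamma_t$ guarantees this is a family of closed curves in $\R^2\setminus\{0\}$ (each $\gamma_t$ is closed, so $\gamma_t(a)=\gamma_t(b)$, even though this common value may move with $t$); the same estimate gives local constancy, hence constancy, of $t\mapsto I_f(\gamma_t)$. I do not expect a genuine obstacle here: the only point deserving care is the homotopy invariance of the index of a curve in $\R^2\setminus\{0\}$, which is the standard consequence of the homotopy lifting property for the covering $\Pi$ (lifts of homotopic loops share the same first-coordinate increment), and the bookkeeping that, in the free homotopy case, one needs each $\gamma_t$ to be a closed curve but not the endpoints to be independent of $t$.
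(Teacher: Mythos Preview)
Your argument is correct and is precisely the approach the paper indicates: the paper does not give a formal proof of this lemma but simply remarks that it ``follows by continuity of $I_f(\gamma)$ on $f$ and $\gamma$ whenever the index is defined,'' and your proposal is a careful unpacking of exactly that observation (integer-valued index plus local constancy via a straight-line homotopy in $\R^2\setminus\{0\}$).
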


The following is an immediate consequence:

\begin{clly}
\label{l3}
Let $f$ be fixed point free on a simple closed curve $\gamma$.
If $P$ is a finite subset of $\gamma$, then there exist $\epsilon>0$ and $\delta>0$ such that the index of $f'$ on $\gamma'$ is equal to the index of $f$ in $\gamma$
whenever the following conditions hold:
\begin{enumerate}
 \item $\gamma'$ is a simple closed curve such that $(\gamma'\setminus\gamma) \cup (\gamma\setminus\gamma')$ is contained in $V_\delta(P)$, the $\delta$ neighborhood of $P$,
 \item $f'$ is a continuous map such that $f'=f$ outside $V_\delta(P)$,
 \item the image under $f'$ of $V_\delta(P)$ is contained in $V_\epsilon(f(P))$.
\end{enumerate}

\end{clly}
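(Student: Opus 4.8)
\pf
Recall that $I_f(\gamma)$ is the winding number about the origin of the closed curve $\Gamma:=f\circ\gamma-\gamma$, read off from any lift of $\Gamma$ under $\Pi$; the claim is that this integer is unaffected by modifications of $f$ and of $\gamma$ confined to small neighbourhoods of the finite set $P$. The plan is: fix scales adapted to $f$ on $\gamma$; cut $\gamma$ --- and accordingly $\gamma'$ --- into ``unchanged'' arcs and one ``exceptional'' short arc near each $p\in P$; and compare the angular variation of $\Gamma$ with that of $\Gamma':=f'\circ\gamma'-\gamma'$ arc by arc, lifting through $\Pi$ exactly as in Lemma \ref{l1}.

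Set $\eta:=\min\{\,|f(x)-x| : x\in\gamma\,\}$, which is positive by compactness of $\gamma$ and the hypothesis. Using continuity of $f$ near $\gamma$, choose $\delta>0$ so small that the closed balls $\overline{B(p,\delta)}$ $(p\in P)$ are pairwise disjoint, each meets $\gamma$ in a single sub-arc, $f(\overline{B(p,\delta)})\subset B(f(p),\eta/8)$ for every $p$, and $\delta<\eta/8$; then take $0<\epsilon\le\eta/8$. (That $I_{f'}(\gamma')$ be defined is tacitly part of the hypothesis, i.e.\ $f'$ is fixed point free on $\gamma'$.) Since $\gamma'\setminus\gamma\subset V_\delta(P)$ and $\gamma\setminus\gamma'\subset V_\delta(P)$, the two curves coincide off $V_\delta(P)$; thus they share the same unchanged arcs, namely the components of $\gamma\setminus V_\delta(P)$, meeting at points that lie off $V_\delta(P)$. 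Over each unchanged arc $\Gamma$ and $\Gamma'$ are literally the same path, so they contribute the same angular variation.

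Now fix $p\in P$, and let $J_p:=\gamma\cap\overline{B(p,\delta)}$, $J'_p:=\gamma'\cap\overline{B(p,\delta)}$; these have the same pair of endpoints (the junctions with the neighbouring unchanged arcs). Over $J_p$ the values of $\Gamma$ lie in $B(f(p)-p,\eta/8+\delta)$, a disc missing the origin because $|f(p)-p|\ge\eta>\eta/8+\delta$; hence $\Gamma|_{J_p}$ is homotopic rel endpoints, within $\R^2\setminus\{0\}$, to the segment joining its endpoints. Over $J'_p$ the values of $\Gamma'$ are differences $u-v$ with $u\in V_\epsilon(f(P))$ and $v\in\overline{B(p,\delta)}$, so they lie in $\bigcup_{q\in P}B(f(q)-p,\epsilon+\delta)$; being connected, $\Gamma'|_{J'_p}$ stays inside one component of this union. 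Choosing $\delta,\epsilon$ further in terms of $\eta$ so that the balls $B(f(q)-p,\epsilon+\delta)$ with $|f(q)-p|$ not small stay away from $0$, the component met by $\Gamma'|_{J'_p}$ must be the one around $f(p)-p$ --- it contains the endpoints, which are within $\eta/8+\delta$ of $f(p)-p$ --- and it misses the origin. So $\Gamma'|_{J'_p}$ also lies in a disc avoiding $0$ and is homotopic there, rel endpoints, to the same segment. Lifting through $\Pi$, the angular variation of $\Gamma$ over $J_p$ equals that of $\Gamma'$ over $J'_p$; summing these equalities over all $p\in P$ and over the unchanged arcs yields $I_{f'}(\gamma')=I_f(\gamma)$.

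I expect the main obstacle to be the comparison over the exceptional arcs: a priori $\gamma'$ may wander all over $B(p,\delta)$ and $f'$ is only required to map $V_\delta(P)$ into $V_\epsilon(f(P))$, so one must make sure $\Gamma'|_{J'_p}$ cannot acquire a net rotation about the origin. This is precisely what the quantitative relations among $\eta$, $\delta$ and $\epsilon$ --- together with the fixed point freeness of $f'$ on $\gamma'$ --- are meant to guarantee, by confining $\Gamma'|_{J'_p}$ to a disc disjoint from the origin with the prescribed endpoints.
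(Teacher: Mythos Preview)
Your route differs from the paper's. The paper dispatches the corollary in one sentence by appealing to Lemma~\ref{l1}: for $\epsilon,\delta$ small, $\gamma$ is homotopic to $\gamma'$ through curves on which $f$ has no fixed points, and then $f$ is homotopic to $f'$ without fixed points on $\gamma'$, so the index is preserved at each stage. Your arc-by-arc comparison of angular variation is a legitimate alternative and makes the mechanism more explicit, but the execution has a gap at the crucial step.

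The trouble is the sentence ``Choosing $\delta,\epsilon$ further in terms of $\eta$ so that the balls $B(f(q)-p,\epsilon+\delta)$ with $|f(q)-p|$ not small stay away from $0$, the component met by $\Gamma'|_{J'_p}$ \ldots\ misses the origin.'' The stated condition is vacuous (a ball of radius $\epsilon+\delta$ centred at $f(q)-p$ misses $0$ automatically once $|f(q)-p|>\epsilon+\delta$), and nothing you have imposed prevents the connected component of $\bigcup_q B(f(q)-p,\epsilon+\delta)$ containing $f(p)-p$ from also swallowing, through a chain of overlaps, a ball centred near the origin (take $q\in P$ with $f(q)=p$, which is perfectly allowed). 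The fixed-point freeness of $f'$ on $\gamma'$ only keeps $\Gamma'$ off $0$; it does not by itself rule out a nonzero net winding of $\Gamma'|_{J'_p}$. The repair is easy: also require $2(\epsilon+\delta)$ to be smaller than the minimum distance between distinct points of the finite set $f(P)$. Then the balls $B(f(q)-p,\epsilon+\delta)$ for distinct values of $f(q)$ are pairwise disjoint, the relevant component is the single ball $B(f(p)-p,\epsilon+\delta)$, and it misses $0$ since $|f(p)-p|\ge\eta$. For this to work cleanly you should pick $\delta$ \emph{after} $\epsilon$, small enough that $f(\overline{B(p,\delta)})\subset B(f(p),\epsilon)$ rather than $B(f(p),\eta/8)$, so that the endpoints of $\Gamma'|_{J'_p}$ genuinely lie in $B(f(p)-p,\epsilon+\delta)$; with your present choices ($\epsilon\le\eta/8$ but $f(\overline{B(p,\delta)})\subset B(f(p),\eta/8)$) they need not.
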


The proof of this result follows from the fact that if $\epsilon$ and $\delta$ are sufficiently small, then both $f$ and $f'$ and $\gamma$ and $\gamma'$ are homotopic.  This previous corollary and the following two criteria is all that is needed to calculate indexes in the sequel.

Assume that $f$ is constant, equal to $p$ and that $\gamma$ is a positevely oriented simple closed curve. The index $I_f(\gamma)$ is equal to
$1$ if $p$ belongs to $\en(\gamma)$, equal to $0$ if $p$ belongs to $\ex(\gamma)$, and not defined when $p\in\gamma$.

We proceed to generalize this statement.

\begin{lemma}
\label{l2}Let $f$ be defined on a positively oriented simple closed curve $\gamma:[0,1]\to \R^2$.
Let $s$ be an open arc of $\gamma$ and assume that the image of $s$ is compactly contained in $\en(\gamma)$. Assume also that $f$ has no fixed points on $\gamma$.
Then, there exists a homotopy $\{f_t : 0\leq t\leq 1\}$ beginning at $f=f_0$, and a time $t'\in (0,1)$ such that the following conditions hold:
\begin{enumerate}
\item
$f_t(s)\subset \en(\gamma)$ for every $t<t'$.
\item
$f_t(s)\subset\ex(\gamma)$ for every $t>t'$
\item
$f_{t'}(s)\subset s$.
\item
For every $t$ it holds that $f_t=f$ in $\gamma\setminus s'$ where $s'$ is an open arc compactly containing $s$, and $f_t(s')\cap\gamma\subset s$.
\end{enumerate}

In addition, for a homotopy satisfying all these properties, it holds that $I_{f_0}(\gamma)=I_{f_1}(\gamma)+1$.
\end{lemma}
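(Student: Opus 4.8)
The plan is to prove the two assertions in turn: first that a homotopy with properties (1)--(4) exists, and then that \emph{any} such homotopy satisfies the index identity.

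For existence, fix a point $q_0$ in the open arc $s$ and a small round disc $D$ about $q_0$ with $D\cap\gamma\subset s$, so that $\gamma$ splits $D$ into $D^+=D\cap\en(\gamma)$ and $D^-=D\cap\ex(\gamma)$. Since $f(\overline s)$ is a compact subset of the open topological disc $\en(\gamma)$, which is pathwise connected, $f|_{\overline s}$ is homotopic inside $\en(\gamma)$ to the constant map with value a point $q_0^+\in D^+$ chosen as close to $q_0$ as we wish; taking $s'$ slightly larger than $s$ and letting the homotopy interpolate on $s'\setminus s$ (using that $\en(\gamma)$ and $\ex(\gamma)$ are pathwise connected, one routes these images so they meet $\gamma$ only near $q_0$), one obtains a homotopy equal to $f$ on $\gamma\setminus s'$ with $f_t(s')\cap\gamma\subset s$ throughout. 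Following it with the straight segment inside $D$ from $q_0^+$ through $q_0$ to a point $q_0^-\in D^-$ gives the desired homotopy; at the instant $t'$ when the image of $s$ collapses to $q_0\in s$ we have $f_{t'}(s)\subset s$, so (1)--(4) hold.

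Now let $\{f_t\}$ be \emph{any} homotopy satisfying (1)--(4), with critical time $t'$. By (1) and (4), for $t\neq t'$ the map $f_t$ has no fixed point on $\gamma$ (on $\gamma\setminus s'$ it is $f$; on $s$ the image misses $\gamma$; a fixed point on $s'\setminus s$ would lie in $f_t(s')\cap\gamma\subset s$). Hence Lemma \ref{l1} makes $t\mapsto I_{f_t}(\gamma)$ constant on $[0,t')$ and on $(t',1]$, with values $I^-$ and $I^+$, and it remains to show $I^-=I^++1$. Fix $t_-<t'<t_+$ and let $\alpha_t$ be the arc $f_t\circ\gamma-\gamma$ restricted to the parameter interval of $s'$; since $\partial s'\subset\gamma\setminus s'$, the endpoints of $\alpha_t$ are independent of $t$, and off $s'$ the curve $f_t\gamma-\gamma$ is $t$-independent. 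By additivity of the winding number around $0$, $I^- -I^+ = I_{f_{t_-}}(\gamma)-I_{f_{t_+}}(\gamma)$ equals the winding number of the loop obtained by running along $\alpha_{t_-}$ and back along $\alpha_{t_+}$. We may take $t_\pm$ arbitrarily close to $t'$; by uniform continuity of the homotopy and (3) (so $f_{t'}(\overline s)\subset\overline s$), both $\alpha_{t_-}$ and $\alpha_{t_+}$ are then uniformly close to $\alpha_{t'}$, and over the compact buffer $s'\setminus s$, where $f_{t'}$ has no fixed point, $\alpha_{t'}$ stays a definite distance from $0$; hence the buffer parts of the loop are small back-and-forth excursions contributing negligibly to its total change of argument. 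The remaining contribution comes from $\alpha_{t_-}$ and $\overline{\alpha_{t_+}}$ over the parameter interval of $s$: there $\alpha_{t'}$ has nonzero endpoint values $f_{t'}(\gamma(a))-\gamma(a)$ and $f_{t'}(\gamma(b))-\gamma(b)$, which are chords of the arc $\overline s$ issuing from its ends; straightening $\overline s$ (and $\gamma$ near it) to a segment traversed with increasing parameter and with $\en(\gamma)$ on the left, these chords point to opposite sides of $0$ along the segment, $\alpha_{t_-}$ stays on the $\en(\gamma)$ side and $\alpha_{t_+}$ on the $\ex(\gamma)$ side, so each of the two sub-arcs sweeps an argument close to $\pi$ and the loop sweeps a total argument close to $2\pi$. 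Being an integer multiple of $2\pi$ and close to $2\pi$, it equals $2\pi$, so the winding number is $1$ and $I_{f_0}(\gamma)=I^-=I^++1=I_{f_1}(\gamma)+1$.

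The step I expect to be the main obstacle is the last computation: $I_f(\gamma)$ is defined through the subtraction $f\gamma-\gamma$, which is not preserved by the straightening homeomorphism, so one must either realise the straightening by an ambient isotopy seen to avoid the (possibly present) fixed points of $f_{t_\pm}$ lying off $\gamma$---exploiting that $f_{t_\pm}$ are close to $f_{t'}$, which near $\overline s$ is valued in $\overline s$---and invoke Lemma \ref{l1} again, or carry out the argument-of-the-displacement bookkeeping directly in the original picture, using only that $f_{t'}(\overline s)\subset\overline s$, that the displacement stays strictly on the $\en(\gamma)$ side for $t<t'$ and on the $\ex(\gamma)$ side for $t>t'$, and that $\gamma$ is positively oriented. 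Making this localisation precise---and checking that every homotopy with properties (1)--(4) produces the comparison loop above up to winding-preserving homotopy---is where the work concentrates; the rest is Lemma \ref{l1} together with additivity of the winding number.
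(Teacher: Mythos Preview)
Your proof is correct and follows essentially the same route as the paper: for existence, contract $f|_s$ to a constant inside $\en(\gamma)$ and then push the constant through a point of $s$ into $\ex(\gamma)$; for the index identity, localise the jump at the crossing time $t'$ and compute it in a straightened model where $s$ is a segment. The paper organises the second step a little differently---it first reduces to the case where $f_t|_s$ is constant for every $t$ (invoking the construction from the existence part) and then computes the jump directly on a piecewise-linear model, rather than packaging it as the winding number of your comparison loop---but the substance, including the unspoken appeal to straightening that you rightly flag as the delicate point, is the same.
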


\begin{proof}
Let $h_t:\R^2\to\R^2$ be a homotopy satisfying that $h_0=id$, $h_1$ carries $f(s)$ to a point $p$ contained in $\en(\gamma)$, and for every $t$ it holds that
$h_t(\en(\gamma))\subset\en(\gamma)$ and $h_t$ is the identity
in a neighborhood $U$ of the closure of $\ex(\gamma)$. Then $h_t\circ f$ is a homotopy begining at $f$, having no fixed points on $\gamma$, and such that the final map has
$h_1(f(s))=p$.\\
So it can be assumed that $f$ is constant in $s$, say $f(s)=p_0\in \en(\gamma)$. Next, let $p(t)$ be a curve defined in $[0,1]$ such that
$p(0)=p_0$, $p(t)\in \en(\gamma)$ if $t<1/2$, and $p(t)\in \ex(\gamma)$ if $t>1/2$ and $p(1/2) \in s$.
Take an arc $s'$ containing $s$ in its interior, so that $s'\setminus s$ has two connected components whose images under $f$ are contained in the complement of $U$.
Let $f_t$ be a homotopy beginning at $f$, such that following three conditions hold: $f_t(s)=p(t)$, $f_t(s'\setminus s)\cap (\gamma\setminus s)=\emptyset$, and $f_t=f$ in
$\gamma\setminus s'$. Then the conditions of the lemma hold with $t'=1/2$. This proves the first assertion of the lemma.\\

It remains to prove that $I_{f_0}(\gamma)=I_{f_1}(\gamma)+1$, the second assertion of the lemma.
It was explained above that $f_t$ can be assumed to be constant in $s$ for every $t$.
So the problem reduces to estimate the jump of $I_{f_t}(\gamma)$ when $t$ passes through $t'$. The construction made implies that no $f_t$ has fixed points in $\gamma\setminus s$ so that every $f_t$ can be assumed to be equal to $f$ in $\gamma\setminus s'$, where $s'$ is a small neighborhood of $s$ whose image under $f_t$ is contained in $\en(\gamma)$ for every $t<t'$.

Then the jump can be calculated as if $s$ is the segment $(-1,1)$ contained in the real axis, oriented from right to left, $s'$ is the arc $(-2,2)$
and the image of $s$ changes from the lower half plane to the upper one passing through the origin, while the intersection of $f_t(s')$ with the real axis is contained in $s$, and $f_1=f_0$ in the rest of $\gamma$. Moreover, it may also be assumed that $f_t (s')$ is piecewise linear. See Figure \ref{ind}. In this case, the index change over $s'$ can be  calculated directly from the definition.

\begin{figure}
\caption{}
\label{ind}
\begin{center}

\psfrag{-2}{$-2$}\psfrag{1}{$1$}\psfrag{-1}{$-1$}
\psfrag{2}{$2$}
\psfrag{f-2}{$f(-2)$} \psfrag{f2}{$f(2)$}
\psfrag{fs}{$f(s)$}\psfrag{s}{$s$}\psfrag{f1s}{$f_1(s)$}
\psfrag{p}{$\gamma =c_1\cup c_2\cup\cdots\cup c_n$}
\psfrag{ss}{$s^{'}$}
\psfrag{rr}{$f(-2)$} \psfrag{r}{$f(2)$}
\includegraphics[scale=0.2]{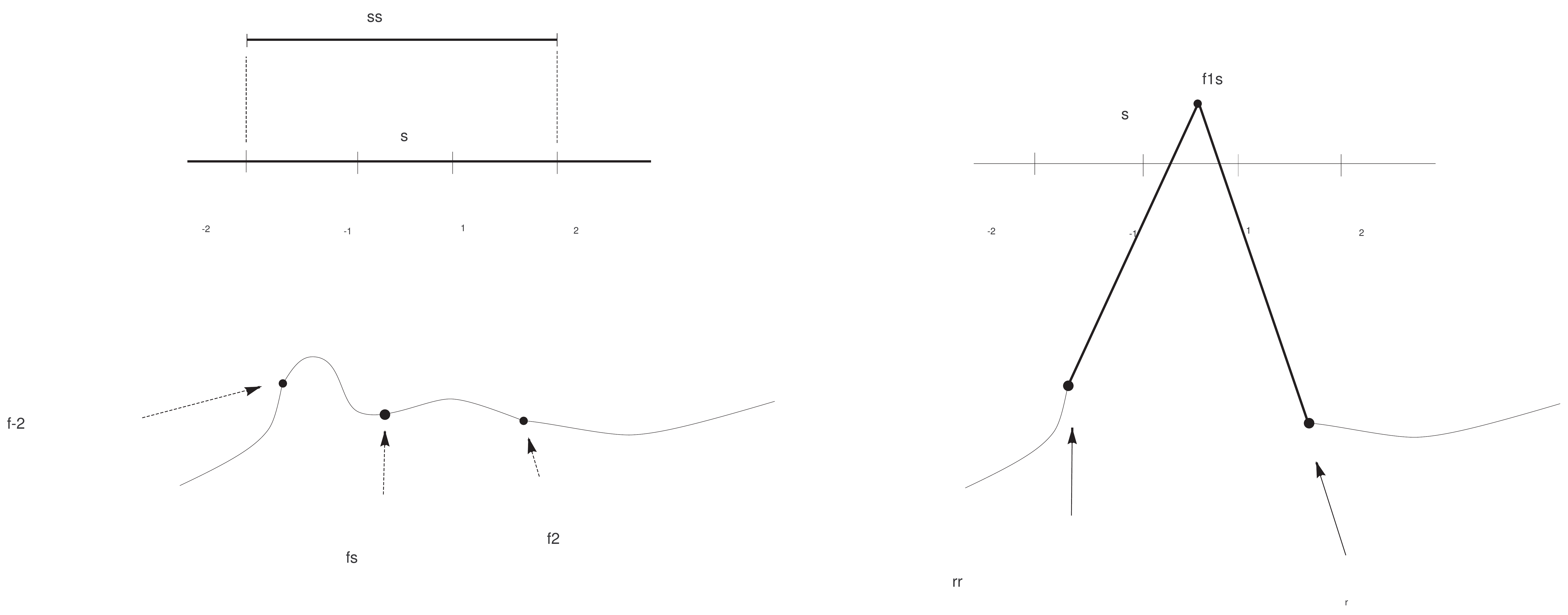}
\end{center}
\end{figure}

\end{proof}

Analogously, we have: 

\begin{clly}\label{meter} Let $f$ be defined on a positively oriented simple closed curve $\gamma:[0,1]\to \R^2$.
Let $s$ be an open arc of $\gamma$ and assume that the image of $s$ is compactly contained in $\en(\gamma)$. Assume also that $f$ has no fixed points on $\gamma$.
Then, there exists a homotopy $\{f_t : 0\leq t\leq 1\}$ beginning at $f=f_0$, and a time $t'\in (0,1)$ such that the following conditions hold:
\begin{enumerate}
\item
$f_t(s)\subset \ex(\gamma)$ for every $t<t'$.
\item
$f_t(s)\subset\en(\gamma)$ for every $t>t'$
\item
$f_{t'}(s)\subset s$.
\item
For every $t$ it holds that $f_t=f$ in $\gamma\setminus s'$ where $s'$ is an open arc compactly containing $s$, and $f_t(s')\cap\gamma\subset s$.
\end{enumerate}

In addition, for a homotopy satisfying all these properties, it holds that $I_{f_0}(\gamma)=I_{f_1}(\gamma)-1$.
\end{clly}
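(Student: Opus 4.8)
The plan is to obtain this as the exact dual of Lemma~\ref{l2}, by running the same argument with the roles of $\en(\gamma)$ and $\ex(\gamma)$ interchanged. (For the statement to be consistent the standing hypothesis should be read as: the image of $s$ is compactly contained in $\ex(\gamma)$, so that $f_0(s)\subset\ex(\gamma)$ agrees with condition~(1); this is precisely the ``insertion'' counterpart of the ``extraction'' carried out in Lemma~\ref{l2}.) So nothing genuinely new is needed beyond reproducing that proof with the obvious sign changes.

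Concretely, I would first, exactly as in Lemma~\ref{l2}, produce a homotopy $h_t$ of the plane with $h_0=\mathrm{id}$, with $h_t$ equal to the identity on a neighbourhood $U$ of $\overline{\en(\gamma)}$ and satisfying $h_t(\ex(\gamma))\subset\ex(\gamma)$ for all $t$, and with $h_1(f(s))$ equal to a single point $p_0\in\ex(\gamma)$. Then $h_t\circ f$ is a homotopy beginning at $f$, no member of which has a fixed point on $\gamma$, and along which $I_{(\cdot)}(\gamma)$ is constant by Lemma~\ref{l1}; hence we may assume $f$ is constant on $s$, equal to $p_0\in\ex(\gamma)$. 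Next I would choose a curve $p:[0,1]\to\R^2$ with $p(0)=p_0$, $p(t)\in\ex(\gamma)$ for $t<1/2$, $p(1/2)\in s$, and $p(t)\in\en(\gamma)$ for $t>1/2$; pick an arc $s'$ containing $\overline s$ in its interior with $s'\setminus s$ having image under $f$ in the complement of $U$; and take a homotopy $f_t$ with $f_t(s)=p(t)$, $f_t(s'\setminus s)\cap(\gamma\setminus s)=\emptyset$, and $f_t=f$ on $\gamma\setminus s'$. Conditions~(1)--(4) then hold with $t'=1/2$, which is the first assertion.

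For the index identity, observe as in Lemma~\ref{l2} that no $f_t$ has a fixed point on $\gamma\setminus s$ and all $f_t$ agree with $f$ off $s'$, so the only possible change of $I_{f_t}(\gamma)$ occurs over $s'$ as $t$ passes $1/2$; this is computed in the normalized local picture of Figure~\ref{ind} (straighten $s$ to a segment of the real axis oriented consistently with $\gamma$, make $f_t(s')$ piecewise linear, and keep the intersection of $f_t(s')$ with the axis inside $s$). The only difference from Lemma~\ref{l2} is that here $f(s)$ crosses the axis from the side corresponding to $\ex(\gamma)$ to the side corresponding to $\en(\gamma)$, i.e.\ in the opposite sense; reading the definition of the index off this model then gives a jump of $-1$ in place of $+1$, that is, $I_{f_0}(\gamma)=I_{f_1}(\gamma)-1$.

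The only real work here is bookkeeping rather than ideas: one must choose $h_t$, the curve $p$, and the arc $s'$ so that no intermediate $f_t$ ever acquires a fixed point on $\gamma$ (otherwise the relevant indices are undefined and Lemma~\ref{l1} does not apply), and one must check that the normalized local model accounts for the entire index change, i.e.\ that $\gamma\setminus s'$ contributes nothing; both points are handled verbatim as in the proof of Lemma~\ref{l2}. A tempting shortcut is to apply Lemma~\ref{l2} to $\gamma$ traversed with the opposite orientation, but that requires a version of Lemma~\ref{l2} for negatively oriented curves together with a careful account of the sign relation $I_f(\bar\gamma)=-I_f(\gamma)$, so in the end it is no shorter than simply repeating the computation with the signs flipped.
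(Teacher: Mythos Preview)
Your proposal is correct and is exactly what the paper intends: the paper gives no separate proof of this corollary, merely prefacing it with ``Analogously, we have,'' so the intended argument is precisely the dual of Lemma~\ref{l2} with $\en(\gamma)$ and $\ex(\gamma)$ interchanged, which is what you carry out. You are also right to flag the inconsistency in the hypothesis (the stated ``$f(s)$ compactly contained in $\en(\gamma)$'' is a copy-paste slip from Lemma~\ref{l2}; it must read $\ex(\gamma)$ for condition~(1) at $t=0$ to make sense).
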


\begin{lemma}
\label{rectangulo}
Let $R\subset \R ^2$ be the square $[-1,1]^2$.  Let $f$ be defined on $\partial R$ such that:
\begin{itemize}
 \item $f(\{y=1\})\subset \{y<1\}$,
 \item $f(\{y=-1\})\subset \{y>-1\}$,
 \item $f(\{x=1\})\subset \{x>1\}$,
 \item $f(\{x=-1\})\subset \{x<-1\}$.

\end{itemize}

Then, $I_f (\gamma) = -1$, where $\gamma$ is $\partial R$ with the positive orientation.

\end{lemma}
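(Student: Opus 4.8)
The first thing to verify is that $I_f(\gamma)$ is even well defined, i.e.\ that $f$ has no fixed point on $\partial R$. This is immediate from the hypotheses: a point of the side $\{y=1\}$ is sent into $\{y<1\}$ and so cannot be fixed, and likewise for the other three sides; the four corners are covered since each lies on two of these sides. So $I_f(\gamma)$ makes sense and it remains to show it equals $-1$.

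The plan is to deform $f$ on $\partial R$ to the standard linear saddle and use the homotopy invariance of the index (Lemma~\ref{l1}). Put $g(x,y)=(2x,\tfrac12 y)$. One checks directly that $g$ satisfies the four displayed conditions: $g(x,1)=(2x,\tfrac12)\in\{y<1\}$, $g(x,-1)=(2x,-\tfrac12)\in\{y>-1\}$, $g(1,y)=(2,\tfrac12 y)\in\{x>1\}$, $g(-1,y)=(-2,\tfrac12 y)\in\{x<-1\}$. Now I would take the straight-line homotopy $f_t=(1-t)f+tg$ on $\partial R$, joining $f_0=f$ to $f_1=g$, and show that no $f_t$ has a fixed point on $\partial R$. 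The key observation is that on each side of the square the two vectors $f-\gamma$ and $g-\gamma$ lie in one and the same open half-plane: on $\{y=1\}$ both have strictly negative second coordinate, on $\{y=-1\}$ both have strictly positive second coordinate, on $\{x=1\}$ both have strictly positive first coordinate, and on $\{x=-1\}$ both have strictly negative first coordinate. Since an open half-plane is convex and misses the origin, every intermediate vector $v_t=f_t-\gamma=(1-t)(f-\gamma)+t(g-\gamma)$ is again nonzero on that side; hence no $f_t$ has a fixed point on $\partial R$. By Lemma~\ref{l1}, $I_f(\gamma)=I_{f_0}(\gamma)=I_{f_1}(\gamma)=I_g(\gamma)$.

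Finally I would compute $I_g(\gamma)$ straight from Definition~\ref{index}: it is the index (winding number about $0$) of the closed curve $t\mapsto g(\gamma(t))-\gamma(t)=(x(t),-\tfrac12 y(t))$, where $(x(t),y(t))$ runs positively, i.e.\ counterclockwise, around $\partial R$. The assignment $(x,y)\mapsto(x,-\tfrac12 y)$ is a linear isomorphism of determinant $-\tfrac12<0$, hence orientation reversing, and it carries $\partial R$ to the boundary of the rectangle $[-1,1]\times[-\tfrac12,\tfrac12]$ traversed clockwise. Since the origin lies in the interior of that rectangle, this clockwise simple loop winds $-1$ time around it, so $I_g(\gamma)=-1$ and therefore $I_f(\gamma)=-1$.

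I expect the only genuinely delicate point to be checking the fixed-point-free property of the homotopy at the four corners: each corner belongs to two adjacent sides whose half-plane conditions differ, but each of the two conditions already forces $v_t\neq 0$ there, so there is no clash. Everything else is routine. (One could also bypass the model map $g$ and estimate the argument increment of $f-\gamma$ side by side: on each side $f-\gamma$ stays in an open half-plane, so it contributes an argument increment lying in $(-\pi,0)$, giving an overall increment in $(-4\pi,0)$ which, being a multiple of $2\pi$, must equal $-2\pi$; but routing through $g$ is cleaner.)
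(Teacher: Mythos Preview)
Your proof is correct and complete. The verification that the straight-line homotopy $f_t=(1-t)f+tg$ is fixed-point free is the crux, and your half-plane argument handles it cleanly; the direct computation of $I_g(\gamma)$ via the orientation-reversing linear map $(x,y)\mapsto(x,-\tfrac12 y)$ is also fine. (Your parenthetical side-by-side argument is likewise valid: on each side the vector $f-\gamma$ starts in one open quadrant and ends in the clockwise-adjacent one while staying in the half-plane common to both, so the continuous argument increment really is in $(-\pi,0)$.)

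The paper argues differently. Instead of homotoping to a model saddle, it post-composes with the vertical collapse $h_t(x,y)=(x,(1-t)y)$ to reach a map $f_1=h_1\circ f$ whose image lies on the line $\{y=0\}$; one checks from the hypotheses that this homotopy is fixed-point free on $\partial R$. At this stage the images of the two vertical sides lie in $\{x>1\}$ and $\{x<-1\}$ respectively, i.e.\ in $\ex(\gamma)$. The paper then invokes Corollary~\ref{meter} twice to push each of these arcs into $\en(\gamma)$, each application lowering the index by~$1$ (hence contributing $+2$ when read the other way), arriving at a map $f_2$ with $f_2(\partial R)\subset\en(\gamma)$ and therefore $I_{f_2}(\gamma)=1$. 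This yields $I_f(\gamma)=I_{f_1}(\gamma)=I_{f_2}(\gamma)-2=-1$.

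Your route is more self-contained: it avoids Lemma~\ref{l2}/Corollary~\ref{meter} entirely and reduces everything to a single explicit winding-number computation. The paper's route, by contrast, exercises exactly the ``push an arc across $\gamma$ and track the $\pm1$ jump'' mechanism that it relies on repeatedly in Section~4, so it serves as a warm-up for those later index calculations. Either approach is perfectly adequate for the lemma itself.
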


\begin{proof}
Let $h_t(x,y)=(x,(1-t)y)$, so that $h_0$ is the identity and $h_1(x,y)=(x,0)$. Let $f_t=h_t\circ f$. Then $f_0=f$ and the hypotheses imply that no $f_t$ has fixed points in $\partial R$ (for example, take any $(x,1)\in \partial R$ and let $f(x,1)=(u,v)$, then $v<1$ implies that
$f_t(x,1)=(u, (1-t)v)$ and $(1-t)v<1$ for every $t\in (0,1)$). Note that the image of $\partial R$ under $f_1$ is contained in the line $y=0$, and that the image of $x=1$ is
contained in $x>1$ and the image of $x=-1$ contained in $x<-1$. It follows from Corollary \ref{meter} that one can obtain a map $f_2$ such that
$f_2(\partial R)$ is contained in the interior of $R$, so that $1=I_{f_2}(\partial R)=I_{f_1}(\partial R)+2= I_{f}(\gamma)+2$.
\end{proof}

Of course, the fact that $R$ is a square is not  essential in the  hypothesis, and we will use the lemma and remark above to calculate index of more general curves:

\begin{clly}
\label{torcido}
Let $\alpha$ and $\beta$ be disjoint simple proper lines in the plane, each one of which separate the plane. Let $\gamma$ and $\delta$ be another pair of disjoint curves separating
the plane. Assume also that each $\gamma$ and $\delta$ intersect $\alpha$ in one point and $\beta$ in one point. Now let $\Gamma$ be the simple closed curve determined by the four
arcs of the curves delimited by the intersection points, with the positive orientation.
Now let $f$ be a map defined on $\Gamma$ such that $f(\Gamma\cap\alpha)$ is contained in the component of the complement of $\alpha$ that contains $\beta$,
$f(\Gamma\cap\beta)$ is contained in the component of the complement of $\beta$ that contains
$\alpha$, that $f(\Gamma\cap\delta)$ is contained in the component of the complement of $\delta$ that does not contain $\gamma$ and that
$f(\Gamma\cap\gamma)$ is contained in the component of the complement of $\gamma$ that does not contain $\delta$. Then the index of $f$ in $\Gamma$ is equal to $-1$.
\end{clly}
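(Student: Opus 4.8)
The plan is to reduce the statement to Lemma~\ref{rectangulo} by straightening the whole configuration to the standard square. The first step is to locate $\en(\Gamma)$ with respect to the four curves. Write $a_\alpha,a_\beta,a_\gamma,a_\delta$ for the four arcs of $\alpha,\beta,\gamma,\delta$ that make up $\Gamma$. Since $\gamma$ and $\delta$ meet $\alpha$ in one point each and $\beta$ is disjoint from $\alpha$, the set $\Gamma\setminus a_\alpha$ is connected and disjoint from $\alpha$, so it lies in one component of $\R^2\setminus\alpha$; as it contains a sub-arc of $\delta$, which touches $\beta$, that component is the one containing $\beta$. A short arc-chasing argument, following $\alpha$ past the two endpoints of $a_\alpha$ where it crosses $\gamma$ and $\delta$, shows that $\alpha\setminus a_\alpha\subset\ex(\Gamma)$; hence $\alpha\cap\en(\Gamma)=\emptyset$ and $\en(\Gamma)$ lies entirely in the component of $\R^2\setminus\alpha$ that contains $\beta$. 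The same reasoning gives that $\en(\Gamma)$ lies in the component of $\R^2\setminus\beta$ containing $\alpha$, and in the components of $\R^2\setminus\gamma$ and $\R^2\setminus\delta$ containing $\delta$ and $\gamma$ respectively; dually, the component of $\R^2\setminus\gamma$ \emph{not} containing $\delta$ is connected, disjoint from $\Gamma$ and unbounded, hence contained in $\ex(\Gamma)$, and likewise for $\delta$. Thus the four target regions appearing in the statement are exactly the side of $\alpha$ meeting $\en(\Gamma)$, the side of $\beta$ meeting $\en(\Gamma)$, the side of $\gamma$ inside $\ex(\Gamma)$, and the side of $\delta$ inside $\ex(\Gamma)$: the same ``two sides inward, two sides outward'' pattern as in Lemma~\ref{rectangulo}.

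Next I would produce an orientation-preserving homeomorphism $H$ of the plane carrying $\alpha,\beta$ to the horizontal lines $\{y=1\},\{y=-1\}$ (a Schoenflies-type straightening of two disjoint proper lines, sending the region between them to the strip $\{-1<y<1\}$, the $\beta$-side of $\alpha$ to $\{y<1\}$, and the $\alpha$-side of $\beta$ to $\{y>-1\}$); after this normalization $\gamma$ and $\delta$ are disjoint curves meeting each of the two horizontal lines in a single point, and a further orientation-preserving homeomorphism that keeps those lines in place straightens $\gamma$ to $\{x=-1\}$ and $\delta$ to $\{x=1\}$, carrying the exterior side of $\gamma$ to $\{x<-1\}$ and that of $\delta$ to $\{x>1\}$. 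Composing, $H(\Gamma)=\partial R$, and since $H$ sends the bounded complementary component of $\Gamma$ to that of $\partial R$ it sends the positive orientation of $\Gamma$ to the positive orientation of $\partial R$. Putting $g=H\circ f\circ H^{-1}$, a fixed-point-free map on $\partial R$, the first step together with the construction of $H$ translates the four hypotheses on $f$ into the four hypotheses of Lemma~\ref{rectangulo} for $g$ — for instance $g(\{y=1\})=H(f(\Gamma\cap\alpha))\subset H(\text{$\beta$-side of }\alpha)=\{y<1\}$, and similarly for the other three sides — so that $I_g(\partial R)=-1$.

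It remains to transport this value back to $\Gamma$. Any orientation-preserving homeomorphism of the plane is isotopic to the identity; choosing an isotopy $\{H_s\}_{0\le s\le 1}$ with $H_0=\mathrm{id}$, $H_1=H$, the loops $t\mapsto H_s(f(\Gamma(t)))-H_s(\Gamma(t))$ never vanish (since $H_s$ is injective and $f$ has no fixed point on $\Gamma$) and depend continuously on $s$, giving a homotopy in $\R^2\setminus\{0\}$ from $f\circ\Gamma-\Gamma$ to $g\circ(H\circ\Gamma)-(H\circ\Gamma)$; hence $I_f(\Gamma)=I_g(\partial R)=-1$. I expect the genuine work to be concentrated in the normalization step: invoking the correct form of the Schoenflies theorem for proper lines and then straightening the two transverse arcs while holding the first two lines fixed. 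The geometric bookkeeping of the first step and the index transport of the last are routine. (One can also try to imitate the proof of Lemma~\ref{rectangulo} directly: Corollary~\ref{meter} applied to $a_\gamma$ and then to $a_\delta$ pushes both of those images into $\en(\Gamma)$ and raises the index by $2$. The obstruction there is that the images of $a_\alpha$ and $a_\beta$ are only known to lie on the correct side of $\alpha$, resp.\ $\beta$, not inside $\en(\Gamma)$, which is why the change-of-coordinates route is cleaner.)
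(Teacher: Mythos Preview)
Your proposal is correct and follows essentially the same route as the paper: straighten the configuration by an orientation-preserving homeomorphism $H$ of the plane taking $\alpha,\beta,\gamma,\delta$ to the four lines bounding the unit square, apply Lemma~\ref{rectangulo} to $H\circ f\circ H^{-1}$ on $\partial R$, and then use that $H$ is isotopic to the identity to transport the index back to $\Gamma$. The paper's own proof is a two-line version of exactly this; your additional first paragraph locating $\en(\Gamma)$ relative to the four lines, and your explicit isotopy argument $t\mapsto H_s(f(\Gamma(t)))-H_s(\Gamma(t))$ for the index transport, simply spell out what the paper leaves implicit.
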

\begin{proof}
Note that there exists a homeomorphism $H$ isotopic to the identity such that $H(\alpha)=\{y=1\}$, $H(\beta)=\{y=-1\}$, $H(\gamma)=\{x=-1\}$ and $H(\delta)=\{x=1\}$. Then the map $H\circ f$ satisfies the hypothesis of Lemma \ref{rectangulo} so $-1 = I_{H\circ f}(\partial R)=I_f(\Gamma)$.
\end{proof}

\begin{rk}\label{rk1}
If the last two items are changed to $f(\{x=1\})\subset \{x<-1\}$ and $f(\{x=-1\})\subset \{x>1\}$, then the conclusion is $I_f(\partial R)=1$.
To prove that apply the same homotopy and then observe that there exists $f_2$ homotopic to $f_1$ such that $f_2(\partial R)$ is contained in the interior of $R$, but now the homotopy from $f_1$ to $f_2$ has no fixed points in $\partial R$. The conclusion follows immediately.
\end{rk}

As before, one obtains:

\begin{clly}
\label{torcido2}
Let $\alpha$ and $\beta$ be disjoint simple proper lines in the plane, each one of which separate the plane. Let $\gamma$ and $\delta$ be another pair of disjoint curves separating
the plane. Assume also that each $\gamma$ and $\delta$ intersect $\alpha$ in one point and $\beta$ in one point. Now let $\Gamma$ be the simple closed curve determined by the four
arcs of the curves delimited by the intersection points, with the positive orientation.
Now let $f$ be a map defined on $\Gamma$ such that $f(\Gamma\cap\alpha)$ is contained in the component of the complement of $\alpha$ that contains $\beta$,
$f(\Gamma\cap\beta)$ is contained in the component of the complement of $\beta$ that contains
$\alpha$, that $f(\Gamma\cap\delta)$ is contained in the component of the complement of $\gamma$ that does not contain $\delta$ and that
$f(\Gamma\cap\gamma)$ is contained in the component of the complement of $\delta$ that does not contain $\gamma$. Then the index of $f$ in $\Gamma$ is equal to $1$.
\end{clly}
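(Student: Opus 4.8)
The plan is to reduce Corollary \ref{torcido2} to Remark \ref{rk1} in exactly the way Corollary \ref{torcido} was reduced to Lemma \ref{rectangulo}. First I would produce a homeomorphism $H$ of the plane, isotopic to the identity, with $H(\alpha)=\{y=1\}$, $H(\beta)=\{y=-1\}$, $H(\gamma)=\{x=-1\}$ and $H(\delta)=\{x=1\}$; such an $H$ exists because the configuration of two disjoint separating proper lines together with a disjoint pair of separating curves meeting each of the lines in a single point is, up to ambient isotopy, the model grid configuration on the square $R$. Under $H$ the closed curve $\Gamma$ is carried to $\partial R$ with the positive orientation, and since $H$ is isotopic to the identity it does not change the Lefschetz index of a curve (apply Lemma \ref{l1} along an isotopy from $H$ to the identity, the relevant curves staying off the origin because $f$ has no fixed point on $\Gamma$). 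Hence it suffices to compute $I_{H\circ f}(\partial R)$.

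Next I would translate the four hypotheses on $f$ into boundary conditions on $H\circ f$. The conditions coming from the arcs in $\alpha$ and $\beta$ are identical to those in Corollary \ref{torcido} and give $H\circ f(\{y=1\})\subset\{y<1\}$ and $H\circ f(\{y=-1\})\subset\{y>-1\}$. For the two modified conditions, recall $H(\gamma)=\{x=-1\}$ and $H(\delta)=\{x=1\}$: the hypothesis that $f(\Gamma\cap\gamma)$ lies in the component of the complement of $\delta$ not containing $\gamma$ becomes $H\circ f(\{x=-1\})\subset\{x>1\}$, and the hypothesis that $f(\Gamma\cap\delta)$ lies in the component of the complement of $\gamma$ not containing $\delta$ becomes $H\circ f(\{x=1\})\subset\{x<-1\}$. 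These are precisely the crossed conditions of Remark \ref{rk1}, so that remark gives $I_{H\circ f}(\partial R)=1$, whence $I_f(\Gamma)=1$.

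The step needing the most care is the bookkeeping of which component of each complement one lands in after applying $H$: one must verify that the two altered hypotheses really yield the crossed inclusions $H\circ f(\{x=-1\})\subset\{x>1\}$ and $H\circ f(\{x=1\})\subset\{x<-1\}$ rather than the uncrossed pair, since that distinction is exactly what separates the conclusion $I_f(\Gamma)=1$ from $I_f(\Gamma)=-1$. Everything else is a verbatim repetition of the argument proving Corollary \ref{torcido}, now fed through Remark \ref{rk1} instead of Lemma \ref{rectangulo}.
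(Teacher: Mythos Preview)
Your proposal is correct and follows exactly the approach the paper intends: the paper states Corollary \ref{torcido2} with the words ``As before, one obtains,'' meaning the proof of Corollary \ref{torcido} is to be repeated verbatim with Remark \ref{rk1} replacing Lemma \ref{rectangulo}. Your translation of the crossed hypotheses under $H$ is accurate, and your extra care in justifying $I_{H\circ f}(\partial R)=I_f(\Gamma)$ via the isotopy and Lemma \ref{l1} makes explicit a step the paper leaves implicit.
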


\section{Proof of Theorem 1.}

\noindent
{\bf Theorem \ref{t1}.} {\em Let $f: A \to A$ be a surjective map with degree $d$, $K\subset A $ an essential continuum such that
$f^{-1}(K)=K$. If $d>1$, then $f$ is complete on $\fil(K)$.}

As was already explained, it is sufficient to prove that every lift of $f$ has a fixed point on the preimage of $\fil(K)$ under the covering projection .

The proof will use indexes of curves, so it is convenient to interpret the annulus as a subset of the plane.
The map $f$ will be considered as a map from $A:=\R^2\setminus \{S\}$ into itself,
where $S$ is the origin (called South; the other end of the annulus is $\infty$, called North).
The map $f$ is assumed to have degree $d$, which is equivalent to impose that the index of the closed curve $t\in[0,1]\to f(\exp(2\pi i t))$ is equal to $d$.

As $K$ is essential, the complement of $K$ in the annulus $A=\R^2\setminus \{S\}$ has two distinguished connected components. The unbounded one is denoted $A_N$, while $A_S$ stands for the connected component of $A\setminus K$ that accumulates on $S$. Note that $A_S\cup \{S\}$ is simply connected, thus Riemann's Theorem implies that there exists a conformal bijection $h$ from $A_S\cup \{S\}$ to the unit disc $\D$ that carries
$S$ to the origin.

\begin{rk}
\label{ends}
Note that as $K$ is connected, then $A_N$ and $A_S$ are the unique components of $A\setminus K$ that are essential in $A$, while the others are simply connected. As the degree of $f$ is nonzero, the image under $f$ of an essential open set is essential, and as $f^ {-1}(K)\subset K$ then $f(A_N)$ is a subset of $A_S$ or a subset of $A_N$. The same can be said about $A_S$. On the other hand, as $f$ was assumed to be surjective, only two possibilities remain: either $f(A_N)\subset A_N$ and $f(A_S)\subset A_S$ or $f(A_N)\subset A_S$ and $f(A_S)\subset A_N$. We will consider the two cases separately.
\end{rk}

We will assume first that each $A_S$ and $A_N$ are $f$-invariant.

\subsection{Constructions in the annulus.}

We begin with some constructions of curves in the annulus, this will not give the desired conclusions, not before lifting the curves to the universal covering and calculating their index under a given lift $F$ of $f$.

We will prove first that $f$ has fixed points in the annulus. Assume by contradiction that $f$ has no fixed point.

It will be convenient to recall some results from Caratheodory's theory. Let $\Omega$ be a simply connected region in the plane with a conformal bijection $h:\Omega\to \D$. A crosscut in $\Omega$ is a simple curve $c$ contained in $\Omega$ except for its extreme points which belong to the boundary of $\Omega$. If $h(c(t))\neq 0$ for every $t$, then the complement of $c$ in $\Omega$ has two connected components, one of which contains $h^{-1}(0)$; the other one is denoted $N(c)$. A point $a$ in the boundary of $\Omega$ is accesible if there exists an arc that is contained in $\Omega$ except for one extreme point in $a$.
It is well known that accesible points are dense in the boundary of $\Omega$. Moreover, given two accesible points there exists a crosscut in $ \Omega$ whose extremes are the
given points. Assume that $\overline\Omega$ is compact, (which in our case is obvious since $A_S\cup \{S\}$ will stand for $\Omega$). Under this assumption, given a positive
number $\delta$, there exists a finite set of accesible points, each one of them can be joined with another by a crosscut of diameter less than $\delta$. These crosscuts will be used to construct an essential simple closed curve.

Let $h$ be a conformal bijection from $A_S\cup\{S\}$ to the unit disc carrying $S$ to the origin $0$.
It is claimed that for every sufficiently small $\delta>0$, there exist a finite sequence of crosscuts $\{c_i:1\leq i\leq k\}$ in $A_S$ such that
\begin{enumerate}
\item[(C1)]
$N(c_i)\cap N(c_j)=\emptyset$ whenever $i\neq j$.
\item[(C2)]
$\gamma=c_1.c_2\ldots c_k$ is an essential positively oriented simple closed curve ,
\item[(C3)]
each $c_i$ has diameter less than $\delta$,
\item[(C4)]
$f(c_i)\cap c_i=\emptyset$.
\end{enumerate}

(See figure 2 a).

\begin{figure}[ht]

\psfrag{kk}{$\tilde{K}$}\psfrag{c2}{$c_2$}\psfrag{c3}{$c_3$}
\psfrag{c1}{$c_1$}
\psfrag{nc1}{$N(c_1)$} \psfrag{k}{$K$}
\psfrag{alpha}{$\alpha$}\psfrag{h}{$h$}\psfrag{g1}{$g_1$}
\psfrag{p}{$\gamma =c_1\cup c_2\cup\cdots\cup c_k$}
\psfrag{v}{$V_{\delta}(P)$}
\par
\begin{center}
\subfigure[]{\includegraphics[scale=0.18]{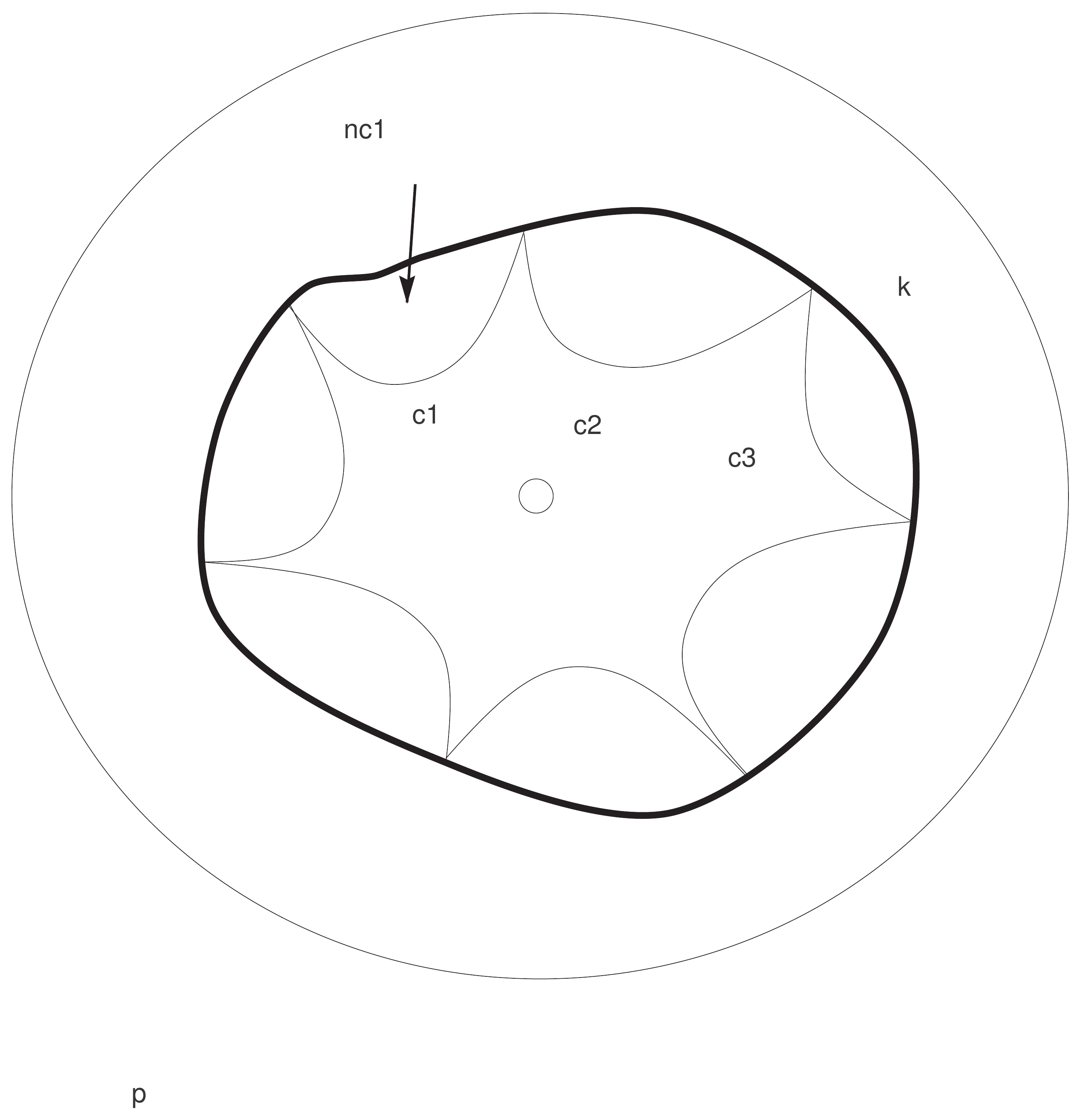}}
\subfigure[]{\includegraphics[scale=0.19]{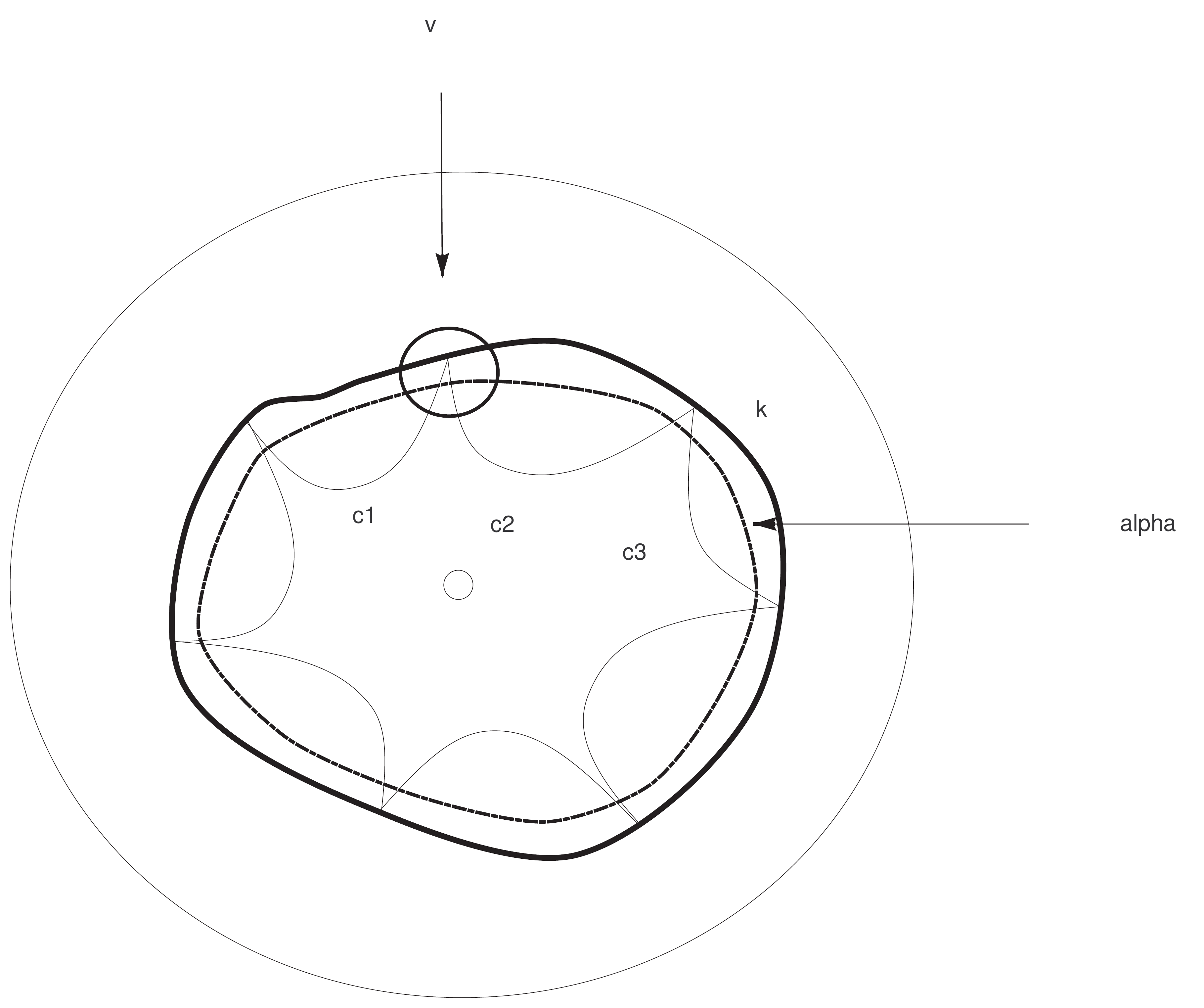}}
\end{center}
\caption{}\label{figura2}
\end{figure}

The proofs of the first, second and third properties follow from the comments above. To prove the last one,
note that the contrary assumption implies that for every sufficiently small positive $\delta_n$ there exists
a point $x_n$ in some crosscut $c_n$ of diameter less than $\delta_n$ such that $f(x_n)\in c_n$, which implies
that the distance from $f(x_n)$ to $x_n$ is less than $\delta_n$. An accumulation point of the sequence $x_n$ must
belong to the boundary of $A_S$, that is contained in $K$, and must be a fixed point of $f$, contrary to our assumption.

Except for the extreme points of the crosscuts, the remaining points of $\gamma$ are contained in the region $A_S$. By assumption in Remark \ref{ends}, $f$ carries $A_S$ to itself. Thus the curve $f(\gamma)$ is contained in $A_S$ excepting for the images of the extreme points of the crosscuts, that are also contained in $K$. Note that the image of a crosscut is not necessarily a crosscut.

Let $P$ be the intersection of $\gamma$ with $K$. $P$ is finite and by Corollary \ref{l3} there exists $\epsilon >0$ and
$\delta >0$ such that one can modify $f$ in the $\delta$ neighborhood of $P$ arbitrarily, whenever the image of the
perturbation is still contained in the $\epsilon$ neighborhood of $f(P)$. Using the fact that $A_S\cup \{S\}$ is conformally
equivalent to the unit disc, it is possible to construct a curve in $A_S$ that is as close to the boundary of $A_S$ as
wished. With this in mind, let $\alpha$ be a simple closed curve contained in $A_S$ that is close to the boundary of $A_S$
in such a way that the intersection of $\alpha$ with $\gamma$ only occurs in $V_\delta(P)$ and that the intersection of
$\alpha$ with $f(\gamma)$ is contained in $V_\epsilon(f(P))$. In order to calculate the index of $f$ in $\gamma$, we apply Corollary \ref{l3}
and we may assume that $\gamma$ and $f(\gamma)$ are both contained in the closure of $A^{\alpha}_S$, the connected component
of the complement of $\alpha$ that contains $S$. Therefore it
can be assumed (for the purposes of index calculation) that the boundary of $A_S$ is a simple closed curve $\alpha$, of course, essential in the annulus $A$.

The next step consists in changing $f$ to a homotopic map $f'$ in order to obtain that $f'(\gamma)\cap\gamma=\emptyset.$ In the process, the index will change, but by means of Lemma \ref{l2} and Lemma \ref{rectangulo} the difference between $I_f(\gamma)$ and $I_{f'}(\gamma)$ will be easily calculated.\\

Using Property (C4), there are two possibilities remaining for each $c_i$: either $f(c_i)\subset N(c_i)$,  or
$f(c_i)\cap N(c_i)=\emptyset$.

Assume that $f(c_i)\subset N(c_i)$.
The image under $f$ of the arc $c_i$ is contained in $\ex(\gamma)$; a direct application of Lemma \ref{l2} implies that the map $f$ is homotopic to a map $f_i'$
satisfying that $f'_i(c_i)\subset \en(\gamma)$ and $I_{f_i'}(\gamma)=I_f(\gamma)+1$ as $ \gamma$ is positively oriented.\\
If there were exactly $J$ of the crosscuts $c_i$ satisfying $f(c_i)\subset N(c_i)$, then change $f$ by a map $f'$ such that $f'(c_i)\cap N(c_i)=\emptyset$ and $I_{f'}(\gamma)=I_f(\gamma)+J$. Moreover $f'$ is equal to $f$ outside a small neighborhood of each $c_i$.\\
Now use $f'$ instead of $f$. Note that, even if $f'(c_j)\cap N(c_j)=\emptyset$ for every $j$, it may very well happen that
$f'(c_j)\cap\gamma\neq\emptyset$.

Let $A^{\gamma}_S$ be the connected component of $A\backslash \gamma$  containing
$S$.  Let $R_t$ be a deformation retract of $A^{\alpha}_S$ to $\gamma_S$ be such that a point in $N(c_i)$ is sent under
application
of $R_1$ to a point in the interior of the arc $c_i$ and such that $R_t(\overline{N(c_i)})\subset \overline{N(c_i)}$ for all $t$. Note that $R_t\circ f'$ cannot have fixed points in $\gamma$ because
for a point $x$ in $c_j$ it is known that $f'(x)$ does not belong to the closure of $N(c_j)$. It follows that the index of
$R_0\circ f'=f'$ and the index of $R_1\circ f'$ with respect to the curve $\gamma$ are equal. Note that the image of
$\gamma$ under $R_1\circ f'$ is contained in the closure of $\en(\gamma)$, so a small $C^0$ perturbation $f''$ of
$R_1\circ f'$ does not change the index and satisfies $f''(\gamma)\subset \en(\gamma)$ and $I_{f''}(\gamma)=1$.

In conclusion, the following assertion was proved.

\begin{clly}
\label{c1}
There exists a map $f'$ homotopic to $f$ such that $f'=f$ in $K$ and all the
components of the complement of $K$ other than $A_S$. Moreover, it holds that
$$
I_f(\gamma)+J=I_{f'}(\gamma)=1,
$$\noindent where $J$ is the number of crosscuts $c_i$ satisfying $f(c_i)\subset N(c_i)$.
\end{clly}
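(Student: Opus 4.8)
\noindent The plan is to package the constructions carried out above into one homotopy $f\simeq f'$ and to compute $I_{f'}(\gamma)$ by bookkeeping the index change at each stage. I keep the essential positively oriented simple closed curve $\gamma=c_1c_2\cdots c_k$ produced by (C1)--(C4), together with the finite set $P=\gamma\cap K$. Using Corollary \ref{l3} and the auxiliary curve $\alpha\subset A_S$ close to $\partial A_S$, I first reduce --- for the purpose of index computations only --- to the situation in which $\alpha$ plays the role of the outer boundary of $A_S$, so that $\gamma$ and $f(\gamma)$ are both contained in $\overline{A_S}$. Every genuine modification of $f$ described below is supported in arbitrarily small neighborhoods of the arcs $c_i\subset A_S$; in particular the resulting $f'$ agrees with $f$ on $K$ and on every component of $A\setminus K$ other than $A_S$, which is the first assertion.

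Next I straighten the crosscuts one at a time. By (C4), for each $i$ either $f(c_i)\subset N(c_i)$ or $f(c_i)\cap N(c_i)=\emptyset$; since $N(c_i)\subset\ex(\gamma)$, in the first case the sub-arc $c_i$ of $\gamma$ has image in $\ex(\gamma)$ while the rest of $\gamma$ is left alone. A single application of Lemma \ref{l2} (equivalently, of Corollary \ref{meter}), with $s$ a small open sub-arc of $\gamma$ around $c_i$ and with the homotopy supported in a neighborhood of $c_i$, replaces $f$ by a homotopic map whose image of $c_i$ lies in $\en(\gamma)$ and whose index on $\gamma$ is exactly one larger. Performing this once for each of the $J$ crosscuts with $f(c_i)\subset N(c_i)$, keeping the supports pairwise disjoint (possible by (C1)), yields a map $f'$ homotopic to $f$ such that $f'(c_i)\cap N(c_i)=\emptyset$ for every $i$ and $I_{f'}(\gamma)=I_f(\gamma)+J$.

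It remains to prove $I_{f'}(\gamma)=1$. The curve $f'(\gamma)$ may still meet $\gamma$, but it meets no $N(c_j)$. So I pick a deformation retract $\{R_t\}_{0\le t\le 1}$ of $\overline{A_S^{\alpha}}$ onto $\gamma$ with $R_t(\overline{N(c_i)})\subset\overline{N(c_i)}$ for all $t$ and $i$, and with $R_1$ sending $\overline{N(c_i)}$ into the open arc $c_i$. Since for $x\in c_j$ one has $f'(x)\notin\overline{N(c_j)}$, the point $R_t(f'(x))$ is never equal to $x$, so $R_t\circ f'$ has no fixed point on $\gamma$ and Lemma \ref{l1} gives $I_{f'}(\gamma)=I_{R_0\circ f'}(\gamma)=I_{R_1\circ f'}(\gamma)$. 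Now $R_1\circ f'$ maps $\gamma$ into $\overline{\en(\gamma)}$ and has no fixed point on $\gamma$, so a sufficiently small $C^0$ perturbation $f''$ has the same index and satisfies $f''(\gamma)\subset\en(\gamma)$; contracting $f''|_\gamma$ to a constant map inside the open topological disk $\en(\gamma)$ is a homotopy with no fixed point on $\gamma$, so by Lemma \ref{l1} and the value of the index of a constant map into $\en(\gamma)$ one gets $I_{f''}(\gamma)=1$. Chaining the equalities gives $I_f(\gamma)+J=I_{f'}(\gamma)=1$.

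The delicate points are the crosscut straightening and the choice of retract. For the former, one must verify that the hypotheses of Lemma \ref{l2}/Corollary \ref{meter} hold exactly: image of the sub-arc compactly on one side of $\gamma$, no fixed points of $f$ on $\gamma$ (guaranteed by (C4) and the standing no-fixed-point assumption), and the modification supported near that sub-arc; the $\pm1$ is then the index jump built into those statements for a positively oriented curve. For the latter, the existence of a deformation retract of $\overline{A_S^{\alpha}}$ onto $\gamma$ preserving each $\overline{N(c_i)}$ is where the disjointness (C1) and $f'(c_i)\cap N(c_i)=\emptyset$ are essential; this topological construction, rather than the index arithmetic, is the step I expect to be the main obstacle.
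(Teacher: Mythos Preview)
Your proof is correct and follows the paper's approach essentially verbatim: straighten the $J$ crosscuts with $f(c_i)\subset N(c_i)$ via Lemma~\ref{l2} to obtain $I_{f'}(\gamma)=I_f(\gamma)+J$, then use the deformation retract $R_t$ to reduce to a map with image in $\en(\gamma)$ and index $1$. One small slip: the retract must be onto $\overline{\en(\gamma)}$ rather than onto $\gamma$ (a closed disk cannot retract onto a circle), and indeed your own next sentence ``$R_1\circ f'$ maps $\gamma$ into $\overline{\en(\gamma)}$'' already presumes this; the paper has the same ambiguity in its notation ``$\gamma_S$''.
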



Now, we want to explain briefly what happens if the construction like that of $\gamma$ and the map $f'$ is done in $A_N$, the unbounded component of the complement of $K$ containing $N$. Now $A_N\cup \{N\}$ gives a simply connected region and there exists a Riemann map carrying $A_N\cup N$ to the unit disc and $N$ to $0$. The construction of a curve $\gamma_N$ by concatenation of crosscuts of $A_N$ satisfying the properties (C1) to (C4) is the same as above. Now the component $N(c_i)$ of a crosscut $c_i$ is the bounded component of the complement of $c_i$ in $A_N$. Observe in addition that if the curve $\gamma$ is positively oriented, then a point in
$N(c_i)$ is located in $\en(\gamma)$, therefore by Lemma \ref{l2} $I_f(c_i)=I_{f'}(c_i)+1$ whenever $c_i$ is a crosscut such that $f(c_i)\subset N(c_i)$ and $f'$ is homotopic to $f$ and
constructed exactly as in the previous part. The construction of the curve $\alpha$ is analogous in this case.  The only difference appears when calculating $I_{f'}(\gamma)$:

\begin{clly}
\label{c2}
There exists a map $f'$ homotopic to $f$ such that $f'=f$ in $K$ and all the components of the
complement of $K$ other than $A_N$. Moreover, it holds that
$$
I_f(\gamma)-J'=I_{f'}(\gamma)=d,
$$
where $J'$ is the number of crosscuts $c_i$ such that $f(c_i)$ is contained in $N(c_i)$ and $\gamma = c_1.c_2.\ldots c_n$ with the positive orientation.

\end{clly}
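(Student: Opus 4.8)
The plan is to rerun, nearly verbatim, the construction just carried out for $A_S$, this time inside the unbounded complementary component $A_N$, and to isolate the one place where the computation really changes, namely the value of the index of the final map. As noted above one fixes a Riemann map of $A_N\cup\{N\}$ onto $\D$ sending $N$ to $0$ and builds from crosscuts of $A_N$, exactly as in (C1)--(C4), an essential positively oriented simple closed curve $\gamma=c_1.c_2\ldots c_n$ with pairwise disjoint $N(c_i)$, with $\mathrm{diam}(c_i)<\delta$, and with $f(c_i)\cap c_i=\emptyset$; property (C4) again uses that $f$ is fixed point free, since a sequence of points of crosscuts of diameter tending to $0$ at which (C4) failed would accumulate at a point of $\partial A_N\subset K$ fixed by $f$. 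Here $N(c_i)$ is the bounded component of $A_N\setminus c_i$, and since $\gamma$ is positively oriented and essential one has $S\in\en(\gamma)$, $N\in\ex(\gamma)$ and, as already observed, $N(c_i)\subset\en(\gamma)$ --- this inclusion is the sign flip relative to the $A_S$ case, where $N(c_i)\subset\ex(\gamma)$. Using that $A_N\cup\{N\}$ is conformally a disc one then picks a simple closed curve $\alpha\subset A_N$ so close to $\partial A_N$ that its intersections with $\gamma$ and with $f(\gamma)$ lie inside $V_\delta(P)$ and $V_\epsilon(f(P))$ respectively, where $P=\gamma\cap K$, and invokes Corollary~\ref{l3} to assume, for the purpose of computing indices, that $\gamma$ and $f(\gamma)$ are contained in $\overline{A^\alpha_N}$, the component of $\R^2\setminus\alpha$ containing $N$; note that $S\notin\overline{A^\alpha_N}$, so every map produced below still takes values in $A$.

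Next I would perform the index‑changing homotopies. By (C4) each $c_i$ has either $f(c_i)\subset N(c_i)$ or $f(c_i)\cap N(c_i)=\emptyset$; let $J'$ be the number of crosscuts of the first kind. For such a $c_i$ the image of $c_i$ is compactly contained in $\en(\gamma)$, so Lemma~\ref{l2}, applied in a small neighbourhood of $c_i$, homotopes $f$ to a map that sends $c_i$ into $\ex(\gamma)$ and has index exactly one less. Doing this for all $J'$ crosscuts yields a map $f'$, homotopic to $f$ and equal to $f$ off small neighbourhoods of those crosscuts --- in particular $f'=f$ on $K$ and on every complementary component of $K$ other than $A_N$ --- with $f'(c_i)\cap N(c_i)=\emptyset$ for every $i$ and $I_{f'}(\gamma)=I_f(\gamma)-J'$. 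It remains to show $I_{f'}(\gamma)=d$. First, mirroring the $A_S$ argument with $\en$ and $\ex$ interchanged, take a deformation retract $R_t$ of $\overline{A^\alpha_N}$ onto $\overline{\ex(\gamma)}$ that carries each $\overline{N(c_i)}$ into the arc $c_i$ and satisfies $R_t(\overline{N(c_i)})\subset\overline{N(c_i)}$ for all $t$; since $f'(x)\notin\overline{N(c_j)}$ whenever $x\in c_j$ and $f'(\gamma)\subset\overline{A^\alpha_N}$, no $R_t\circ f'$ has a fixed point on $\gamma$, so by Lemma~\ref{l1} $I_{f'}(\gamma)=I_{R_1\circ f'}(\gamma)$, and a small $C^0$ perturbation of $R_1\circ f'$, still fixed point free on $\gamma$, produces a map $g$ with $g(\gamma)\subset\ex(\gamma)$ and $I_g(\gamma)=I_{f'}(\gamma)$.

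The genuinely new point --- and the step I expect to be the main obstacle --- is the identity $I_g(\gamma)=d$. In the $A_S$ case the corresponding map had image in the simply connected region $\en(\gamma)$, so the constant‑map rule gave index $1$ at once; here $\ex(\gamma)$ is not simply connected and the degree hypothesis has to enter through its fundamental group. Since every homotopy used above took place among self maps of $A=\R^2\setminus\{S\}$, the loop $g\circ\gamma$ is freely homotopic in $A$ to $f\circ\gamma$, hence, $\gamma$ being essential and $f$ of degree $d$, it has winding number $d$ about $S$. As $g(\gamma)\subset\ex(\gamma)$, whose fundamental group is infinite cyclic and generated by any round circle $\{|z|=R\}\subset\ex(\gamma)$, the loop $g\circ\gamma$ is homotopic inside $\ex(\gamma)$ to $t\mapsto Re^{2\pi i d t}$ through loops disjoint from $\gamma$, so Lemma~\ref{l1} gives that $I_g(\gamma)$ equals the index of the closed curve $t\mapsto Re^{2\pi i d t}-\gamma(t)$. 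Choosing $R$ large enough that $\gamma\subset\{|z|<R\}$, linearly contracting $\gamma(t)$ to $0$ is a zero‑free homotopy of this curve to $t\mapsto Re^{2\pi i d t}$, since $|Re^{2\pi i d t}|=R>|\gamma(t)|$; and the index of $t\mapsto Re^{2\pi i d t}$ is $d$. Therefore $I_f(\gamma)-J'=I_{f'}(\gamma)=I_g(\gamma)=d$, as claimed.
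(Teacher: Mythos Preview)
Your argument is correct and follows the paper's own approach essentially verbatim: the paper too rebuilds the crosscut curve $\gamma$ inside $A_N$, observes that now $N(c_i)\subset\en(\gamma)$ so Lemma~\ref{l2} gives $I_f(\gamma)=I_{f'}(\gamma)+J'$, and then states that $I_{f'}(\gamma)=d$ is ``a simple calculation based on the fact that $\gamma$ is a simple closed curve nontrivial in the annulus, that $f'(\gamma)$ has index $d$ and is contained in the unbounded component of $\gamma$.'' Your last paragraph is precisely this simple calculation spelled out --- homotoping $g\circ\gamma$ inside $\ex(\gamma)$ to $t\mapsto Re^{2\pi i dt}$ and then contracting $\gamma(t)$ to $0$ --- so you have supplied the details the paper omits rather than taken a different route.
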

\begin{proof} It remains to see why $I_{f'}(\gamma)=d$, but this is a simple calculation based on the fact that $\gamma$ is a simple closed curve nontrivial in the
annulus, that $f'(\gamma)$ has index $d$ and is contained in the unbounded component of $\gamma$.
\end{proof}

Assuming that $d>0$, the conclusion is the following:

\begin{clly}
\label{c3}
Let $f$ be a map of the annulus of positive degree $d>1$. If $K$ is an essential compact subset of the annulus such that
$f^{-1}(K)=K$ and both unbounded components of the complement of $K$ are invariant under $f$, then $f$ has a fixed point in
$\fil(K)$.
\end{clly}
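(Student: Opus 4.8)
The plan is to argue by contradiction, building an essential simple closed curve inside $A_S$ and another inside $A_N$ whose Lefschetz indices for $f$ differ, and then invoking the second item of the Lefschetz fixed point Theorem on the planar annulus they bound; the extra work is to make sure the fixed point so produced actually lies in $\fil(K)$ and not merely in that annulus.

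First I would extract two consequences of assuming that $f$ has no fixed point in $\fil(K)$. Since $K$ is a continuum and $A_N$ is the unbounded component of $\R^2\setminus K$, one has $\fil(K)=\R^2\setminus A_N$; hence $\fix(f)\subset A_N$. Moreover $\partial A_N\subseteq K\subseteq\fil(K)$ carries no fixed point, $\fix(f)$ is closed and $\partial A_N$ is compact, so $\fix(f)$ stays at positive distance from $\partial A_N$: there is an open set $U\supseteq\partial A_N$ with $U\cap\fix(f)=\emptyset$. Using density of accessible boundary points of $A_N$ and crosscuts of sufficiently small diameter, I would construct, exactly as in the text, an essential positively oriented simple closed curve $\gamma_N=c_1c_2\cdots c_k$ inside $A_N\cap U$ satisfying (C1)--(C4), arranged so that the component $C_N=\en(\gamma_N)\cap A_N$ of $A_N\setminus\gamma_N$ (the collar between $\gamma_N$ and $\partial A_N$) is also contained in $U$. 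Property (C4) is obtained as before, a violation producing a fixed point on $\partial A_N$; and now $\gamma_N$ is fixed point free, so $I_f(\gamma_N)$ is defined, while $C_N\cap\fix(f)=\emptyset$. On the other side nothing new is needed: any essential positively oriented simple closed curve $\gamma_S$ built from crosscuts of $A_S$ satisfying (C1)--(C4) is automatically fixed point free, since its non-endpoint points lie in $A_S\subseteq\fil(K)$ and its crosscut endpoints lie in $\partial A_S\subseteq\fil(K)$.

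Next I would read off the two indices from the corollaries already proved. Corollary \ref{c1} applied to $\gamma_S$ gives $I_f(\gamma_S)=1-J$ with $J\ge 0$, so $I_f(\gamma_S)\le 1$; Corollary \ref{c2} applied to $\gamma_N$ gives $I_f(\gamma_N)=d+J'$ with $J'\ge 0$, so $I_f(\gamma_N)\ge d$. Since $d>1$ we get $I_f(\gamma_S)\le 1<d\le I_f(\gamma_N)$, hence $I_f(\gamma_S)\ne I_f(\gamma_N)$. The curves $\gamma_S$ and $\gamma_N$ are disjoint, being contained in the disjoint regions $A_S$ and $A_N$; both are positively oriented; and since $\gamma_S\subset A_S\subset\fil(K)\subset\en(\gamma_N)$ they bound a planar annulus $A'=\ex(\gamma_S)\cap\en(\gamma_N)$ whose boundary components are exactly $\gamma_S$ and $\gamma_N$. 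The second item of the Lefschetz fixed point Theorem then furnishes a fixed point $p\in A'$.

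Finally I would check $p\in\fil(K)$, which is where the care taken in the first step pays off. Write $\en(\gamma_N)$ as the disjoint union of $\fil(K)$ and $\en(\gamma_N)\cap A_N=C_N$. As $p\in A'\subset\en(\gamma_N)$ and $C_N$ is fixed point free, $p\notin C_N$, so $p\in\fil(K)$, contradicting the hypothesis and finishing the proof. The one genuinely delicate point is exactly this last inclusion: the Lefschetz Theorem only locates the fixed point in the region between $\gamma_S$ and $\gamma_N$, which in general protrudes past $\fil(K)$ into a collar of $A_N$; the remedy is to have chosen $\gamma_N$ inside the fixed-point-free neighborhood $U$ of $\partial A_N$ from the outset, which is possible precisely because the contradiction hypothesis prevents fixed points from accumulating on $\partial A_N$. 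The only remaining bookkeeping is to keep the orientations of $\gamma_S$ and $\gamma_N$ straight so that Corollaries \ref{c1} and \ref{c2} deliver the inequality with the signs above.
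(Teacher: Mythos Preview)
Your overall strategy matches the paper's: compare $I_f(\gamma_S)\le 1$ with $I_f(\gamma_N)\ge d$ via Corollaries~\ref{c1} and~\ref{c2}, apply item~(2) of the Lefschetz Theorem to the annulus they bound, and then argue that the resulting fixed point lies in $\fil(K)$. However, there is a genuine error in how you handle the $A_S$ side.

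You claim $\fil(K)=\R^2\setminus A_N$ and hence $A_S\subseteq\fil(K)$. This is false. The paper defines $\fil(K)$ as $K$ together with the \emph{relatively compact} components of $A\setminus K$, where $A=\R^2\setminus\{S\}$ is the open annulus. The component $A_S$ accumulates on the end $S$ and is therefore not relatively compact in $A$; thus $A_S\not\subset\fil(K)$, and indeed $A\setminus\fil(K)=A_N\cup A_S$. Consequently your assertion that $\gamma_S$ is ``automatically fixed point free, since its non-endpoint points lie in $A_S\subseteq\fil(K)$'' is unjustified, and your final decomposition $\en(\gamma_N)=\fil(K)\cup C_N$ omits the collar $C_S=\ex(\gamma_S)\cap A_S$ as well as the point $S$. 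Under your contradiction hypothesis, fixed points of $f$ could lie in $A_S$, and nothing you wrote rules out the Lefschetz fixed point landing in $C_S$.

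The fix is immediate and symmetric: treat $A_S$ exactly as you treated $A_N$. Since $\partial A_S\subset K\subset\fil(K)$ carries no fixed point, choose a fixed-point-free neighborhood of $\partial A_S$ and build $\gamma_S$ so that the collar $C_S$ is contained in it. Then the annulus $A'=\ex(\gamma_S)\cap\en(\gamma_N)$ decomposes as $\fil(K)\cup C_N\cup C_S$, both collars are fixed-point-free, and the Lefschetz fixed point must lie in $\fil(K)$. The paper phrases this same idea as a limiting argument (``these curves can be constructed arbitrarily close to the boundary of $A_S$ and $A_N$''), which is logically equivalent to your one-shot collar argument once the symmetry is restored.
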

\begin{proof}
Note first that the curves $\gamma_{N}$ and $\gamma_{S}$ (the notation makes reference to the curve $\gamma$
constructed in $A_N$, resp. $A_S$)
are as close to the set $K$ as wished. Both curves are positively oriented, and their indexes differ, because
$$
I_{f}(\gamma_{N})\geq d>1\geq I_f(\gamma_{S}).
$$

Now Lefschetz fixed point Theorem (see Section \ref{3}) shows that $f$ has a fixed point in the annulus bounded by  $\gamma_{N}$ and $\gamma_{S}$. As these curves can be
constructed arbitrarily close to the boundary of $A_S$ and $A_N$, it comes that $f$ has a fixed point in $\fil(K)$.
\end{proof}

This is not sufficient to conclude that $f$ is complete. It just proves that $f$ has a fixed point, also the same
argument shows that every iterate of $f$ has fixed points, but this contains no additional information. We need to prove that every lift of $f$ has fixed points.

\begin{rk}
Adapting the proof of Corollary 3 a well known Theorem due to Cartwright and Littlewood follows:
{\em If an  orientation preserving homeomorphism $f$ of the plane leaves invariant a continuum $K$ not separating the plane, then $f$ has a fixed point in $K$.}\\
Indeed, take a curve like $\gamma$ in the complement of $K$ and the arguments above show that the index $I_f(\gamma)$ is equal to $d+J$, where $d=1$ (as $f$ is a homeomorphism) and $J\geq 0$.
\end{rk}

\subsection{Lifting the construction.}
We will lift the construction in the annulus and consider an arbitrary lift $F$ of $f$.  We will prove that $F$ has a fixed point (in order to prove completeness by Proposition 1).  Assume by contradiction that $F$ is fixed point free.
It is not straightforward to lift the construction. The difficulty comes at first sight, when the preimage of $K$ under the covering projection is not compact. But there are others, as will become clear while calculating the index of a curve.

The universal covering of the annulus is given by the map $\Pi:\C\to \C\setminus\{0\}$, written in complex coordinates as
$\Pi(z)=\exp(2\pi iz)$.
Note that $\Pi(z+1)=\Pi(z)$ so the group of deck transformations is generated by the translation $z\to z+1$.

Let $K'$ be the preimage of $K$ under the covering projection.
Note that $K'$ is not a compact set but is a closed subset of some horizontal strip $-T\leq y\leq T$; we will assume $T=1$ to simplify notation. Moreover,
$K'$ separates the plane in at least two components, one contains $\{y\leq -1\}$, denoted $A'_S$ and the other contains
$\{y\geq 1\}$, denoted $A'_N$. Denote as usual $z=(x,y)$ for the compex number $z=x+iy$.

Let $\gamma$ and $\alpha$ the curves in $A_S$ constructed above and denote by $\gamma ^S$ and $\alpha ^S$ the preimages of $\gamma$ and
$\alpha$ under the covering projection.
These are embedded lines, each one of which separates the plane in two components.

Choose $q_0^S\in \alpha^S\cap \gamma ^S$ and let $\gamma_0 ^S$ be the lift of $\gamma$ that begins at $q_0^S$. For $m\in \Z$, let
$$
\Gamma_m^S= \bigcup_{k=-m}^m (\gamma_0^S + (k,0)).
$$

\noindent We define analogously $\gamma^N, \alpha ^N, q_0^N$ and $\Gamma _m ^N$.
The closed strip contained between the lines $\alpha^N$ and $\alpha^S$ is denoted by $A^0$.
Let $V_0$ be a simple curve from $q_0^S$ to $q_0^N$ contained in $A^0$, and let $V_m = V_0 + (m,0)$.   Let $V_0'\supset V_0$ an embedded line that separates the plane leaving
$\gamma_0^S \cup \gamma_0^N$ in the connected component of $\R^2\backslash V_0'$ to the right of $V_0'$.  Let $V'_m = V'_0 + (m,0)$. Clearly $V_0$ separates the horizontal strip $A^0$.  Note that
as $F(x+1,y)=F(x,y)+(d,0)$, and $d>1$, then there exists $m>0$ such that the following conditions hold:

\begin{enumerate}
\item
The image of $V_m$ under $F$ is contained in the connected component of $\R^2\setminus V'_m$ to the right of $V'_m$.
\item
The image of $V_{-m}$ under $F$ is contained in the connected component of $\R^2\setminus V'_{-m}$ to the left of $V'_{-m}$.
\item
$F(V_{\pm m})\cap \Gamma_m^S = \emptyset$ and $F(V_{\pm m})\cap \Gamma_m^N = \emptyset$
\end{enumerate}

Finally, let $\beta$ be the simple closed curve
$$
\beta=\Gamma_m^N. V_m.(\Gamma_m^S). (V_{-m})
$$
positively oriented.

Recall we have assumed that $F$  has no fixed points. The lift of $\gamma$ is a concatenation of crosscuts, each one satisfiyng
$F(c'_i)\cap c'_i=\emptyset$, where $c'_i$ lifts $c_i$. There exists $F'$, homotopic to $F$, such that $F'(\Gamma_m^S)\cap\Gamma_m^S=\emptyset$. Let $J^S$ be the number of crosscuts contained in $\Gamma_m^S$
such that $F(c'_i)\subset N(c'_i)$. Define $J^N$ analogously. Moreover, note that $F'(\Gamma_m^S)$ is contained in the region $A'_S$, as well as $F'(\Gamma_m^N)\subset A'_N$. By
application of Lemma \ref{l2} it follows that the $I_F(\beta)=I_{F'}(\beta)+J^S+J^N$.

Note that $F'(\beta)\cap\beta=\emptyset$. The fact that $F'(\Gamma_m^N)$ is contained in $A'_N$ and that $F'(\Gamma_m^S)\subset A'_S$ and that $F'=F$ in $V_m\cup V_{-m}$ and the
choice of $m$ imply that $I_{F'}(\beta)=1$ as $\beta$ is contained in $\en F(\beta)$.

This implies that the index of $I_F(\beta)\neq 0$ and this is absurd since we have assumed
that $F$ was fixed point free. It follows that every lift of $f$ has fixed points.

\begin{figure}
\caption{}
\label{levantado}
\begin{center}
\psfrag{t1}{$T=1$} \psfrag{t2}{$T=-1$} \psfrag{kk}{$K^{'}$}
\psfrag{an}{$A^{'}_N$} \psfrag{as}{$A^{'}_S$}
\psfrag{alphan}{$\alpha^N$} \psfrag{alphas}{$\alpha^S$}
\psfrag{vm}{$V_m$} \psfrag{as}{$A^{'}_S$}
\psfrag{v-m}{$V_{-m}$}
\psfrag{p}{$p$}\psfrag{q}{$q$}
\psfrag{gamman}{$\Gamma_m^{S}$}\psfrag{gammas}{$\Gamma_m^{N}$}
\psfrag{vmm}{$V_m^{'}$}\psfrag{fvm}{$F(V_m)$}
\includegraphics[scale=0.2]{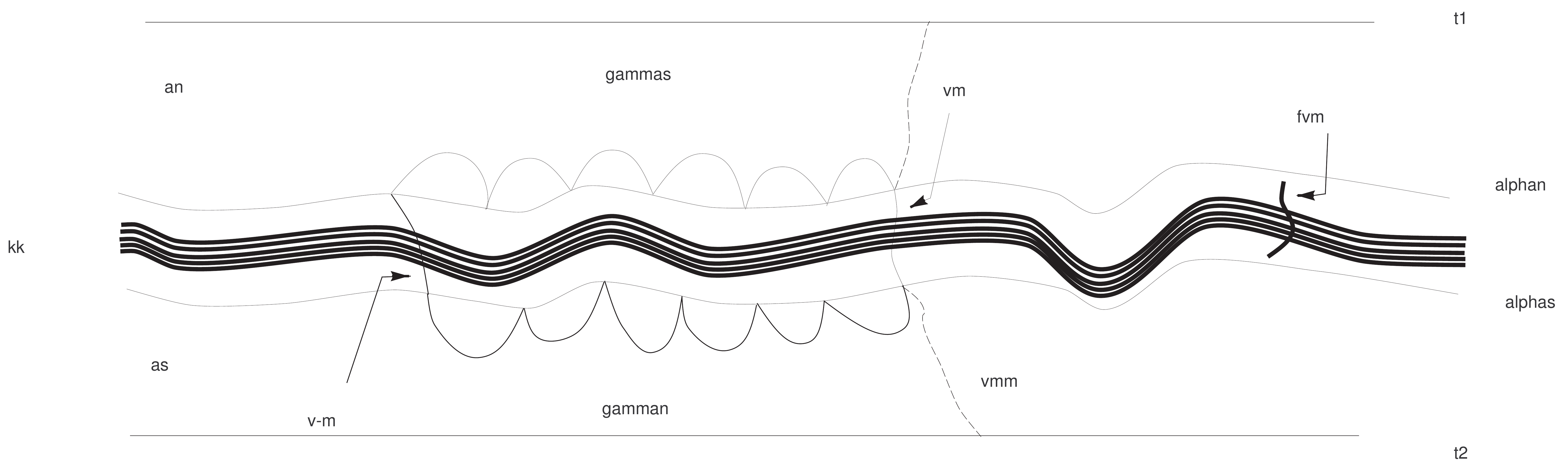}
\end{center}
\end{figure}

Recall that to finish the proof of this theorem, it remains to consider the case where the unbounded components of the complement of $K$ are not invariant, but two-periodic
(Remark \ref{ends}). That is:\\

\noindent
{\em Assume that $f(A_S)=A_N$ and $f(A_N)=A_S$}.\\

\noindent
Let $k$ be an even number. Then $f^k$ satisfies the assumptions made above, (that $A_S$ and $A_N$ are $f^k$-invariant), therefore any lift $G$ of $f^k$ has a fixed point. On the other hand, let $k$ be an odd number and $G$ a lift of $f^k$. Note first that there exists a positive number
$R$ such that $G(\{y=R\})$ is contained in $\{y\leq 0\}$ and $G(\{y=-R\})$ is contained in $\{y\geq 0\}$. By assumption, $d>1$, which implies that there exists a number $m$ such that
$G( m,y)\cap\{x<m\}=\emptyset$ and  $G(- m,y)\cap\{x>-m\}=\emptyset$ whenever $-R\leq y\leq R$.
Let $\Gamma$ be the simple closed curve whose trace is the boundary of the rectangle with vertices $x=\pm m$, $y=\pm R$; give $\Gamma$ the positive orientation. Then $\Gamma$ satisfies the
hypothesis of Lemma  \ref{rectangulo}. Thus the index of $G$ in $\Gamma$ is equal to $-1$.
It follows that $G$ has fixed points. This finishes the proof of Theorem \ref{t1}.

\begin{rk}
\label{cambia}
\begin{enumerate}
\item
It was assumed that $f$ is surjective. It can be seen from the proofs above that it is sufficient to assume that the invariant continuum $K$ is contained in the interior of $f(A)$.
\item
Assume that $f(A_S)=A_N$ and $f(A_N)=A_S$, then the following assertion is also true:\\
{\em Let $f$ be a map of the annulus that interchange the ends (meaning that $f$ can be continuously extended to $A\cup\{S,N\}$ in such a way that $\{S,N\}$ is two-periodic). If the degree of $f$
has absolute value greater than $1$, then for every odd
$k$, the map $f^k$ has at least $|d^k-1|$ fixed points.} For $d>1$ use the arguments above and for $d<-1$, the conclusion follows from Remark \ref{rk1}. In particular, the growth rate
inequality $\limsup_{n\to \infty}\frac{1}{n}\ln( \#\{\fix(f^n)\}) \geq \ln (d)$ holds for $f$.
\end{enumerate}
\end{rk}

\section{Locally connected boundary.}

Let $K\subset A$ be an essential locally connected continuum.
We will use the following properties:

\begin{enumerate}
\item
There exists a simple essential closed curve contained in $K$. See Theorem 43, p.193 in \cite{mo}.
\item
$ K'=\Pi^{-1}(K)$ is connected and locally connected. 
\item Let $U$ be an unbounded connected component of $A\backslash K$. For all $\epsilon >0$ there exists a neighborhood $W$ of $K$ in
$\overline U$ such that the radial retraction $r$ of $W$ onto $K$ satisfies $d(x,r(x))<\epsilon$
for all $x\in W$ (here 'radial' stands for the induced retraction on $W$ by the Riemann map). This is true because $K$ is locally connected and therefore the Riemann map extends continuously to $\overline \D$.
\end{enumerate}

%

\noindent
{\bf Theorem \ref{t2}.} {\em Let $f: A \to A$, $K\subset A $ an essential continuum such that $f(K)\subset K$. If $K$ is
locally connected and $|d|>1$, then $f$ is complete on $\fil (K)$.}

Notations: $\Pi:A'\to A$ is the universal covering of the annulus $A$, the component of the complement of $K$ that contains $N$ is called $A_N$ and
that containing $S$ is denoted $A_S$. Moreover, $A'$ is identified with the product $\R\times (-1,1)$, and define $A'_N=\Pi^{-1}(A_N)$ and $A'_S=\Pi^{-1}(A_S)$.

\begin{proof}
Note that for all $\epsilon>0$ there exists a simple closed curve $\alpha \subset A_N$ such that $d(z,K)<\epsilon$ for every
$z\in \alpha \cup f (\alpha)$.  Analogously, there exists a simple closed curve
$\beta\subset A_S$ satisfying the same properties.  Let $\alpha '$ and $\beta '$ be the preimages of $\alpha$ and $\beta$ by
the covering projection. Note that both $\alpha'$ and $\beta'$ separate $A'$ and as they do not intersect, then there is also a component
of $A'\setminus(\alpha'\cup\beta')$ whose boundary is $\alpha'\cup\beta'$, and this component, denoted $A'_m$, contains $K'$.

Let $F$ be any lift of $f$. Suppose  that
$\fix (F)\cap \fil K' = \emptyset$ (otherwise we are done).  So, there exists $\epsilon >0$ such that $d(x,F(x))>\epsilon$ for all
$x\in \fil K'$. For this $\epsilon$, choose $\alpha$ and $\beta$ as above. By Property (3) of locally connected sets at the beginning of this section, there exists a deformation retract
$r:W\to K'$ such that $d(r, id)<\epsilon/2$, where $W$ is a neighborhood of $K'$.  So, $r\circ F(\alpha')$ is contained
in $K'$.  Analogously,
$r\circ F(\beta')$ is contained
in $K'$.  Besides, the set of
fixed points of
$r\circ F$ in $\fil K'$ are the same as those of $F$. If $d>1$, we may choose real numbers
$x_0< x_1$ such that the intersection of the vertical segment $(x_1,y)$ with $A'_m$ is mapped under $r\circ F$ strictly to its right, and the intersection of the vertical segment
$(x_0,y)$ with $A'_m$ is mapped under $r\circ F$ strictly to its
left.

Let $\gamma$ be the simple closed curve constructed with the segments of $\alpha'$ and $\beta'$ between $x_0$ and $x_1$ and the subarcs of the vertical segments $(x_0,y)$ and $(x_1,y)$ that are contained in $A'_m$.

Note that if $\gamma$ is positively oriented then Corollary \ref{torcido} (of Lemma \ref{rectangulo}) gives $I_{r\circ F}=-1$. This was assuming $d>1$. If $d<-1$, then the
vertical segments at $x_0$ and $x_1$ can be taken so that the vertical at $x_0$ is mapped to the right of $x_1$ and the vertical at $x_1$ is mapped to the left of $x_0$. Now apply Corollary \ref{torcido2} to obtain that the index is equal to $1$.  So, $\fix (F)\cap \fil (K')\neq \emptyset$.

\end{proof}

\noindent
{\bf Example.} We will show that if $d\leq -1$ it may happen that $f$ is fixed point free on $K$. Let $K$ be as in Figure 1 (a). We will construct a map
$F: A' \to  A'$ such that $F(x+1) = F(x)-1$. Take a fundamental domain $D$ of $K'$ as in Figure 1 (b) and we proceed to define $F$ on $D$.
Let $j:[0,1]\to D $ a parametrization such that:
$j(x)\neq j(y)$ for all $\{x,y\}\neq \{a,b\}$, $x\neq y$, and such that $j(a)= j(b)$.
First define $F_0: [0,1]\to [0,1]$ (discontinuous) as in Figure \ref{figura3} (c).  Note that there exists  $F: D\to D$ such that $jF_0(t)= F j(t)$.  Moreover $F$ is continuous on
$D$, as
the disconitnuity of $F_0$ can be avoided on $j([0,1])$ because $j(a)=j(b)$.  Clearly, $F_0$ is fixed point free.  If $F$ has a fixed point $x=j(t)$, then
$$x= F(x)= Fj(t)= jF_0(t).$$  So, $jF_0(t) = j(t)$ implying $t= F_0(t)$ because $t$ and $F_0(t)$ are different from $a$ and $b$.

Note that $F$ factors
to a map $f: A \to A$ of degree $-1$ and that it is fixed point free by construction.
We have then proved that with $d= -1$ one can avoid having a fixed point on $K$. Note that with exactly the same technique one can build a map
$f$, not complete in $K$, of any given
degree $d<-1$.  \\

\begin{figure}[ht]

\psfrag{0}{$0$}\psfrag{a}{$a$}\psfrag{b}{$b$}
\psfrag{1}{$1$}
\psfrag{j}{$j$} \psfrag{j0}{$j(0)$}
\psfrag{ja}{$j(a)=j(b)$}\psfrag{j1}{$j(1)$}\psfrag{k}{$K$}
\psfrag{p}{$\gamma =c_1\cup c_2\cup\cdots\cup c_n$}
\psfrag{ff}{$F_{0}$}
\psfrag{d}{$D$}
\par
\begin{center}
\subfigure[]{\includegraphics[scale=0.13]{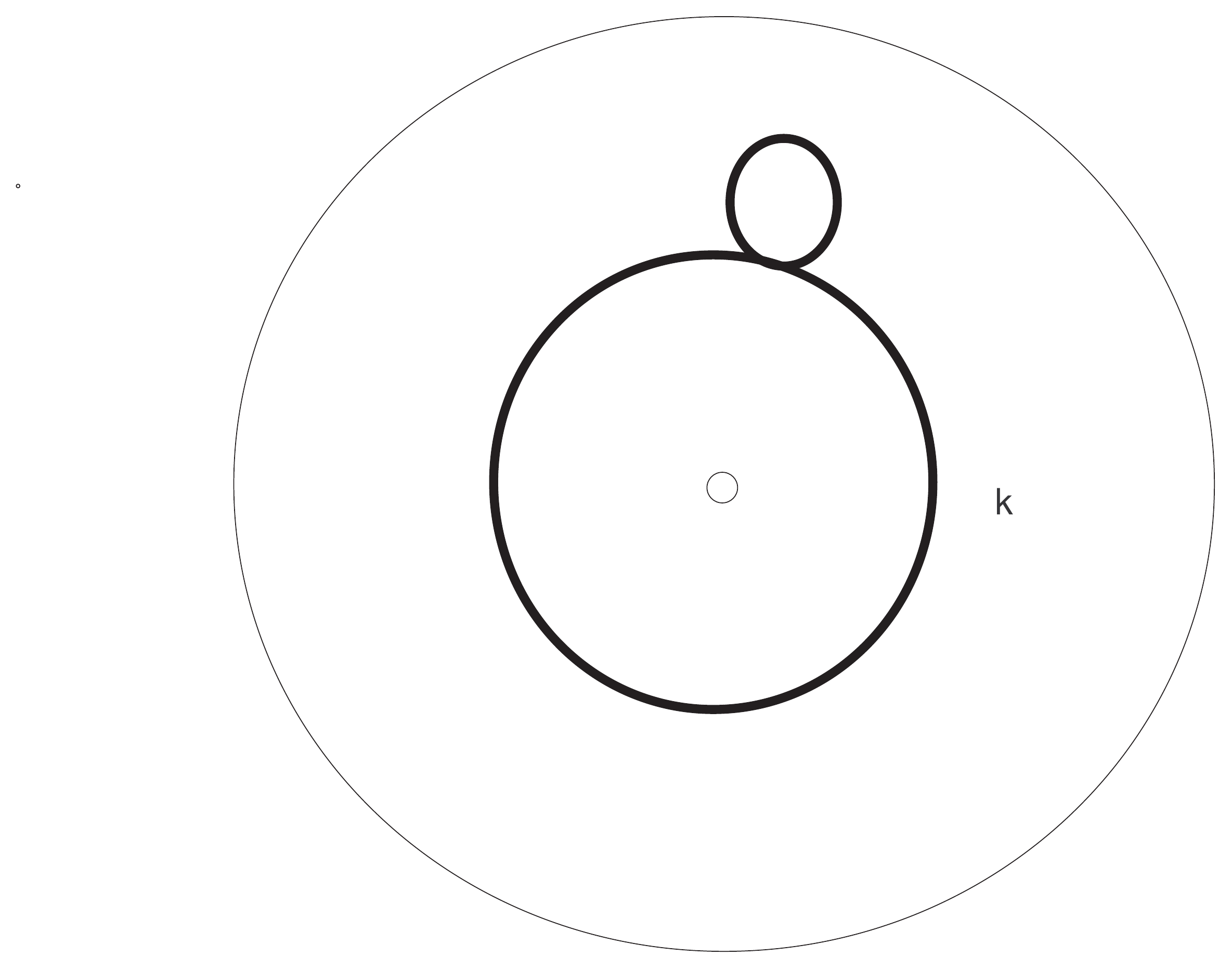}}
\subfigure[]{\includegraphics[scale=0.17]{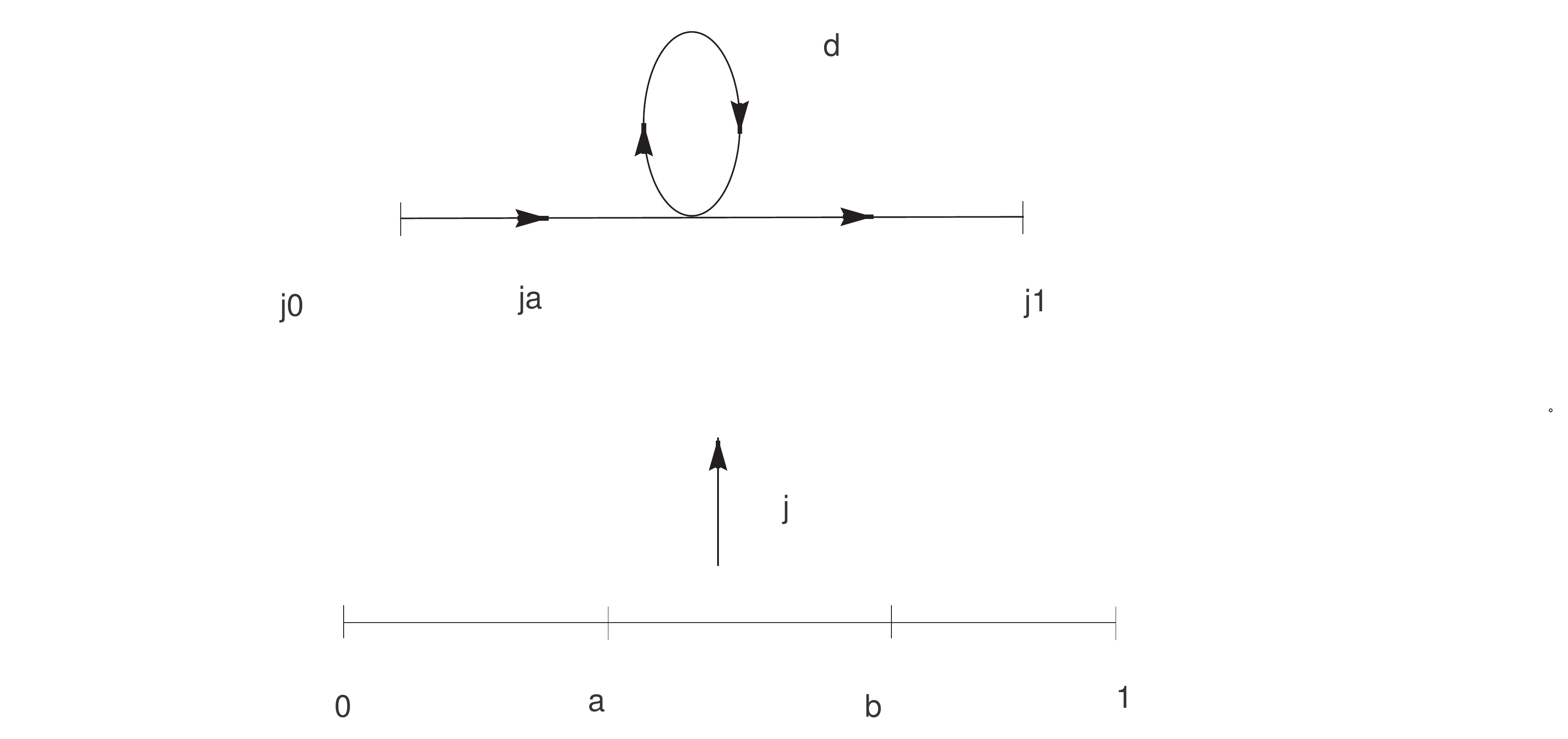}}
\subfigure[]{\includegraphics[scale=0.13]{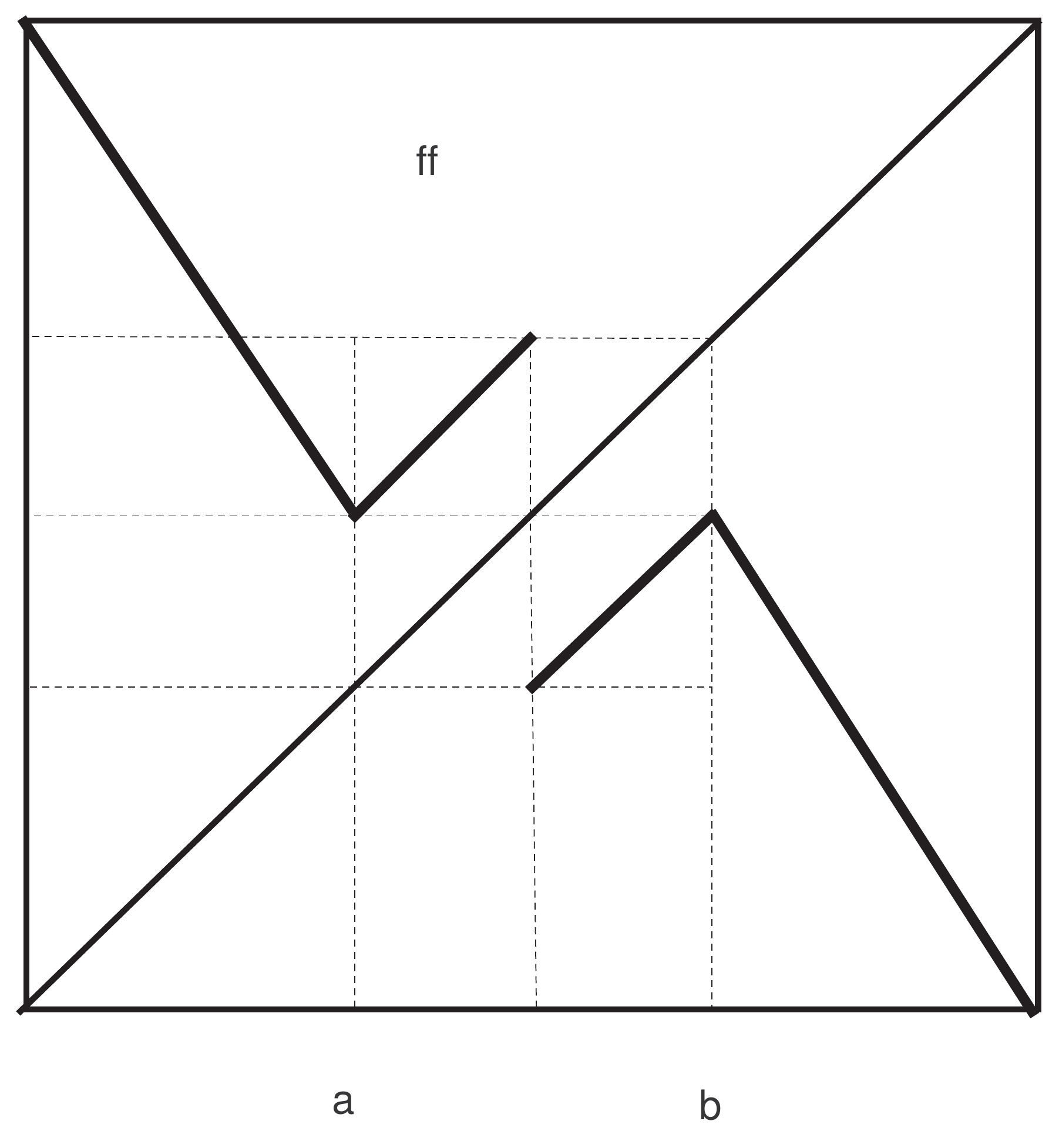}}

\end{center}
\caption{}\label{figura3}
\end{figure}

To assure the existence of periodic points in $K$, and not just $\fil(K)$, we assume in the next Theorem that $K$ is the boundary of one of the unbounded components of its complement and that $d>1$.

\begin{thm}
\label{t4} Let $f: A \to A$ be a map of degree $d$, $K\subset A$ a locally connected essential continuum such that $f(K)\subset K$.
Assume that $K$ is the boundary of one of the unbounded connected components of its complement. If $d>1$, then $f$ is complete on $K$.
\end{thm}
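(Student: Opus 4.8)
\textbf{Proof proposal for Theorem \ref{t4}.}

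The plan is to reduce to Theorem \ref{t2} together with a limiting argument that pushes the fixed point found in $\fil(K)$ onto $K$ itself. By Proposition \ref{comp}, and since every iterate $f^k$ inherits all the hypotheses, it suffices to show that an arbitrary lift $F$ of $f$ has a fixed point in $K' = \Pi^{-1}(K)$. Suppose not: then $\fix(F)\cap K'=\emptyset$, and since $K'$ is a closed subset of a horizontal strip on which $F$ acts without fixed points, there is $\epsilon>0$ with $d(x,F(x))>\epsilon$ for all $x$ in $K'$ (here one uses the periodicity $F(z+1)=F(z)+(d,0)$ to get a uniform bound from a bound on one fundamental domain). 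On the other hand Theorem \ref{t2} already gives us a fixed point in $\fil(K)$; the point of the extra hypothesis is that one of the unbounded complementary components, say $A_N$, has $K=\partial A_N$, and this forces any fixed point to lie on $K$.

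The key step is to exploit that $K = \partial A_N$ and $A_N$ is the unbounded component containing $N$. First I would note that, because $K$ is locally connected, $A_N\cup\{N\}$ is conformally a disc whose Riemann map extends continuously to the boundary circle, so by Property (3) at the start of Section 5 there is, for every $\delta>0$, a neighborhood $W$ of $K$ in $\overline{A_N}$ with a radial retraction $r:W\to K$ satisfying $d(x,r(x))<\delta$. Lifting, we get $r':W'\to K'$ with the same estimate, $W'$ a neighborhood of $K'$ inside $\overline{A'_N}$. Now I choose, as in the proof of Theorem \ref{t2}, the curves $\alpha'\subset A'_N$ and $\beta'\subset A'_S$ close to $K'$, with $\alpha'\cup F(\alpha')$ and $\beta'\cup F(\beta')$ inside $W'$ and its analogue on the $A_S$ side; then $r'\circ F$ maps $\alpha'$ into $K'$ and agrees with a corresponding retraction on the $\beta'$ side. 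Using $d>1$, pick $x_0<x_1$ so the vertical segments at $x_0,x_1$ inside the strip $A'_m$ are pushed strictly left and strictly right by $r'\circ F$; Corollary \ref{torcido} then gives $I_{r'\circ F}(\gamma)=-1$ for the positively oriented closed curve $\gamma$ built from arcs of $\alpha'$, $\beta'$ and those two verticals. Hence $r'\circ F$ has a fixed point $p$ inside $\gamma$. The crucial point is that because $r'\circ F(\alpha')\subset K'$ and we may take $\delta$ arbitrarily small while keeping the same $\epsilon$ from the contradiction hypothesis, this fixed point must lie within distance $\delta$ of $K'$; and since $\alpha'$ can be chosen inside $W'$ arbitrarily close to $K'$, the fixed point of $r'\circ F$ is within $2\delta$ of $K'$ — but on $\fil(K')$, a fixed point of $r'\circ F$ within $\delta$ of $K'$ is a genuine fixed point of $F$ (because $r'=id$ on $K'$ and $r'$ only displaces points of $W'$ by less than $\delta<\epsilon$, contradicting $d(x,F(x))>\epsilon$ unless the point actually sits in $K'$).

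More carefully, I would run the limiting argument directly: take a sequence $\delta_n\to 0$, and for each $n$ the construction above yields a point $p_n$ with $r'\circ F(p_n)=p_n$ and $d(p_n,K')<c\,\delta_n$ for a uniform constant $c$. Then $d(p_n,F(p_n))\le d(p_n,r'\circ F(p_n)) + d(r'\circ F(p_n),F(p_n)) < 0 + \delta_n\to 0$, so any accumulation point $p_\infty$ of $(p_n)$ — which exists, up to deck translation, by the uniform vertical confinement — satisfies $F(p_\infty)=p_\infty$ and $p_\infty\in K'$, contradicting $\fix(F)\cap K'=\emptyset$. This proves every lift of $f$ has a fixed point in $K'$, hence $f$ is complete on $K$.

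\textbf{Main obstacle.} The delicate part is not the index computation, which is verbatim as in Theorem \ref{t2} via Corollary \ref{torcido}, but controlling where the fixed point produced lies: one needs that $K$ being exactly the boundary of the \emph{unbounded} component (rather than merely contained in that boundary) makes the radial retraction available on a full one-sided neighborhood and forces the approximating fixed points to cluster on $K'$ rather than somewhere in the filled interior. Making the two one-sided retractions (on the $A_N$ side where $K=\partial A_N$ and on the $A_S$ side) fit together coherently along the connecting verticals, and checking the uniform constant $c$ in $d(p_n,K')<c\delta_n$, is where the real care is needed; the hypothesis $d>1$ (as opposed to $d<-1$) is used exactly to get the strict left/right pushing that feeds Corollary \ref{torcido} with the correct nonzero index.
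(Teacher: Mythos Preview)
Your argument has a genuine gap precisely at the step you yourself flag as the main obstacle. You assert that the fixed point $p_n$ of $r'\circ F$ satisfies $d(p_n,K')<c\,\delta_n$, but the Lefschetz/index computation only places $p_n$ somewhere in the interior of $\gamma$, and that interior contains all of $\fil(K')$ between the two verticals. Your contradiction hypothesis is only $\fix(F)\cap K'=\emptyset$, so the displacement bound $d(x,F(x))>\epsilon$ holds on $K'$, not on $\fil(K')\setminus K'$; nothing prevents $p_n$ from sitting inside a bounded complementary component of $K'$, far from $K'$ and independently of $\delta_n$. Notice too that your argument never really exploits the hypothesis $K=\partial A_N$: the one-sided radial retraction of Property~(3) exists for any locally connected $K$, so if your limiting scheme worked it would upgrade Theorem~\ref{t2} to completeness on $K$ with no extra hypothesis, which is not what is being claimed.

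The paper's proof takes an entirely different, one-dimensional route, and it is there that $K=\partial A_N$ does real work. One takes the essential simple closed curve $i(\R)\subset K'$ given by Property~(1) and shows, using that every point of $K'$ is accessible from $R'=\Pi^{-1}(A_N)$, that each component $C$ of $K'\setminus i(\R)$ meets $i(\R)$ in a single point $p(C)$ of its closure. This gives a continuous retraction $\psi:K'\to i(\R)$ collapsing each $C$ to $p(C)$, and hence a continuous map $F'=i^{-1}\psi F i:\R\to\R$. Since $d>1$, one finds a fixed point $y_0$ of $F'$ with $F'(z)\ge z$ for all $z>y_0$; a short local-connectivity argument then rules out $F(i(y_0))\in C$ for some component $C$, forcing $F(i(y_0))=i(y_0)$ rather than merely $\psi F(i(y_0))=i(y_0)$. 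The boundary hypothesis is exactly what guarantees the single-point attachment of the components $C$, without which $\psi$ cannot even be defined.
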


\begin{proof}

By hypothesis $K'$ is the boundary of a simply connected region $R'= \Pi ^{-1}(R) $.\\
The preimage under $\Pi$ of the curve given by property (1) at the beginning of this section,
gives a continuous $1$-periodic function $i:\R\to K'$ with the following properties:
\begin{enumerate}
\item  $i$ is a homeomorphism from $\R$ onto $i(\R)$.
\item $i(\R)$ separates the plane.
\end{enumerate}
\noindent
{\bf Claim.} Let $C$ be a connected component of $ K'\setminus i(\R)$. Then $\overline C$, the closure of $C$, intersects $i(\R)$ in a
one point set.\\
To prove this claim note first that the intersection is not empty because $ K'$ is connected as $K$ is locally connected.
Now assume that there are points $x<y$ such that $i(x)$ and $i(y)$ belong to $\overline C$. As $i(\R)$ is contained in $ K'$
and $K'$ is the boundary of an unbounded region
$ R'$, for all $z\in (x,y)$ there exists a simple embedded line $r\subset  R'$ landing at $i(z)$ and separating
$R'$ in two connected components $A$ and $B$.  As $C$ must have points in $A$ and in $B$, this contradicts the fact that $C$ is connected.\\
From now on the unique point of $i(\R)\cap\overline C$ is denoted by $p(C)$.

Define a map $\psi: K'\to i(\R)$ that is the identity on $i(\R)$ and to each point $x$ in a component $C$ of
$ K'\setminus i(R)$ assigns the point $p(C)$. We claim that as $ K'$ is locally connected, $\psi$ is continuous.
To see this, note that if $z\in i(\R)$, then for all neighborhood $V$ of $z$ there exists a neighborhood $U$ of $z$ such that if $p(C)\in U$ and $p(C)\neq z$ then $C\subset V$.  Otherwise,
there exist a sequence $C_n$ of different components accumulating both on $z$ and in a point $z'$ in $\partial V\backslash i(\R)$. This contradicts
local connectivity. Continuity of $\psi$ now follows.

Define $F'=i^{-1}\psi F i$. This is a continuous function from $\R$ to $\R$.
The assumption $d>1$ implies that there is a point $x_0$ such that $F'(x)<x$ for every $x<x_0$ and a point $x_1$ such that
$F'(x)>x$ for every $x>x_1$.
Let $y_0$ be the supremum of the set of points $y$ such that $F'(y)<y$. As $F'$ is continuous, $F'(y_0)=y_0$ and by
definition of $y_0$, $F'(z)\geq z$ for every $z>y_0$. Then either $i(y_0)$ is fixed for $F$ or $F(i(y_0))$ belongs to a
component $C$ of $K'\setminus i(\R)$ such that $p(C)=i(y_0)$. In the latter case, let $V\subset C$ be a neighborhood
of $Fi(y_0)$ in $K'$ such that $V\cap i(\R)=\emptyset$.
Now let $z>y_0$ be close to $y_0$ in such a way that $Fi(z)\in V$. So $F(i(z))\in C$ and it follows that $F'(z)=y_0$. This is
a contradiction, so the unique conclusion left is $F(y_0)=y_0$.
\end{proof}

\noindent
{\bf Theorem \ref{t3}}. {\em
Let $f: A \to A$ be a degree $d$ map of the annulus, where $|d|>1$. Each one of the following conditions imply that $f$ is complete.}
\begin{enumerate}
\item
Both ends of $A$ are attracting.
\item
Both ends of $A$ are repelling.
\end{enumerate}
\begin{proof}

The hypothesis of the first assertion implies that defining $f(N)=N$ and $f(S)=S$ gives a continuous extension of $f$ to $A^*$, the compactification of the annulus with two points, in such a way that $N$ and $S$ become topological attractors. Then there exists a simple closed essential curve $\alpha$ whose image under $f$ is contained in $\alpha_N$, the connected component of $A^*$ that contains $N$. Similarly, there is a simple closed essential curve $\beta$ whose image under $f$ is contained in $\beta_S$. Now let $F$ be a lift of $f$,
$\alpha'$ and $\beta'$ the preimages of $\alpha$ and $\beta$ under the covering projection, so that there is a region $L\subset A'$ whose boundary is the union of $\alpha'$ and $\beta'$.
Let $V_0$ be a simple arc having one point in $\alpha'$ another point in $\beta'$ and whose interior is contained in $L$,
and let $V_m=V_0+(0,m)$ for $m\in\Z$. Let $\Gamma_m$ be the simple closed curve formed with $V_m$, $V_{-m}$ and parts of $\alpha'$ and
$\beta'$. The fact that $|d|>1$ implies that there exists $m>0$ such that $F(\Gamma_m)$ does not intersect $\Gamma_m$ and that $I_{F}(\Gamma_m)=\pm 1$, depending on $d>0$ or $d<0$.
It follows that $F$ has a fixed point. As every iterate of $f$ satisfies the same hypothesis, then $f$ is complete.

For the case of repelling ends, there exists a curve $\alpha$, close to the end $N$ of the annulus, such that $f(\alpha)$ is contained in the component of the complement of $\alpha$ that does not contain $N$. There will be as well a curve $\beta$, close to $S$, whose image under $f$ is contained in the component  of the
complement of $\beta$ that contains $N$. Next consider a lift $F$ of $f$, take the preimages of $\alpha$ and $\beta$ in the universal covering and cut it with vertical arcs, in such a way to obtain a curve $\gamma$ whose sides are $\alpha'$, $\beta'$ and vertical segments, and satisfies the properties of the curve $\gamma$ of Corollary \ref{torcido} if $d>1$. The Corollary implies that the index of $F$ in $\gamma$ is equal to $-1$. On the other hand, if $d<-1$, then the comments succeding Corollary \ref{torcido} imply that $I_F(\gamma)=1$. This implies that every lift of $f$ has a fixed point. As every $f^k$ satisfies the same hypothesis, it follows that $f$ is complete.

\end{proof}

\section{Applications}

The first result is a consequence of Hagopian's theorem \cite{h} which states that an arcwise connected plane continuum $K$ that does not separate the plane has
the fixed point property (i.e. every continuous map from $K$ to $K$ has a fixed point in $K$). We will use this theorem on the universal covering, so we need that
the lift of K is arcwise connected. This may not happen even if the set K is arcwise connected (see Figure 6).  However, note that in this example the set $K$ is simply connected. If
one assumes that $K$ has a nontrivial closed loop, this guarantees that the lift is arcwise connected.\\

\begin{figure}[ht]

\psfrag{kk}{$\tilde{K}$}\psfrag{c2}{$c_2$}\psfrag{c3}{$c_3$}
\psfrag{nc1}{$N(c_1))$} \psfrag{k}{$K$}
\psfrag{i}{$I$}\psfrag{h}{$h$}\psfrag{g1}{$g_1$}
\par
\begin{center}
\subfigure[]{\includegraphics[scale=0.15]{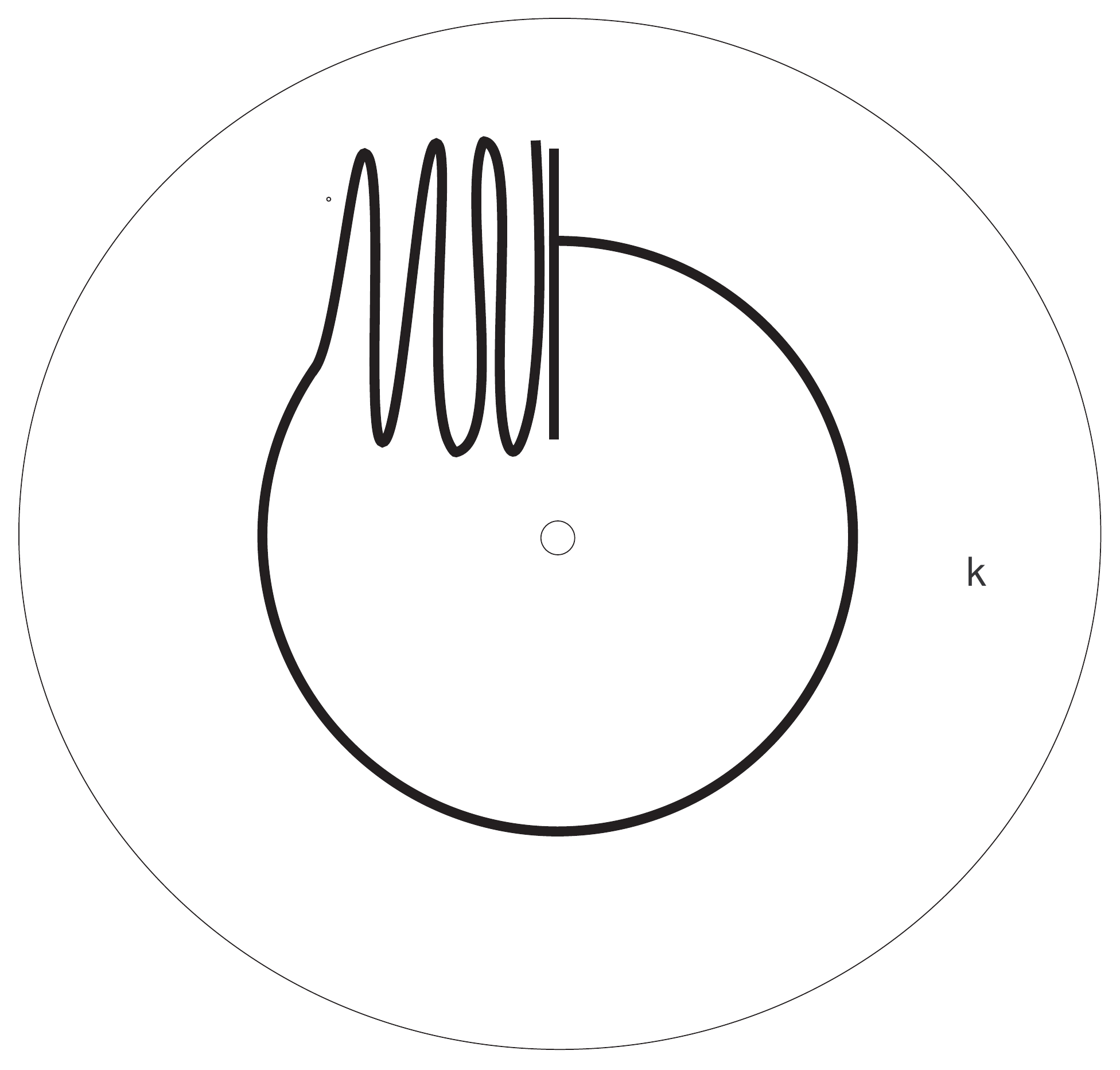}}
\subfigure[]{\includegraphics[scale=0.15]{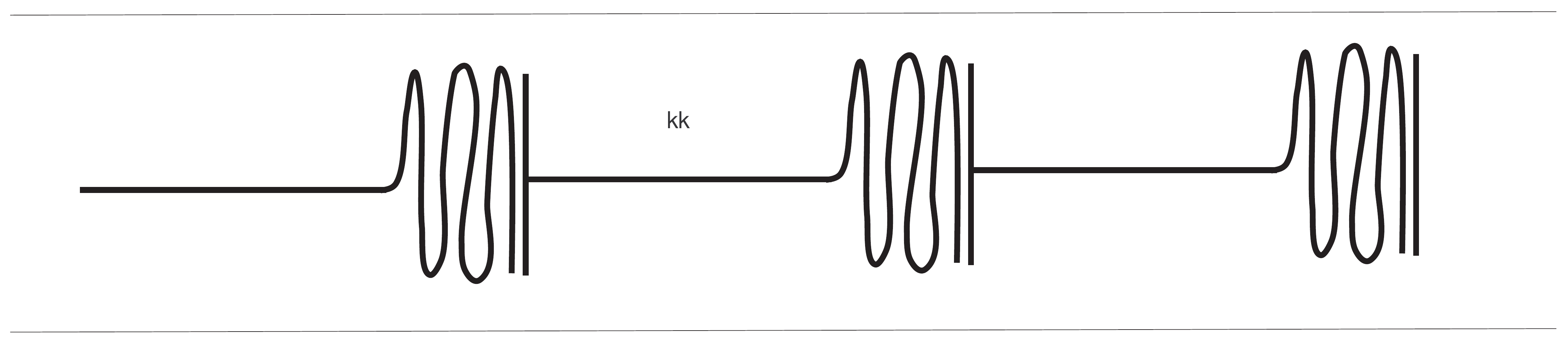}}
\end{center}
\caption{}\label{figura4}
\end{figure}

\begin{prop}
\label{hago}
Let $f: A \to A$, $K\subset A $ a continuum such that $f(K)\subset K$. Assume that $K$ is arcwise-connected, and
$i_* : \pi_1 (K)\to \pi_1 (A)$ is surjective.

If $d<-1$ and $k$ is odd, then every lift $F$ of $f^k$ has a fixed point. It follows that $f$ satisfies the growth rate inequality.

If $d= -1$, then $f$ has a fixed point in $\fil(K)$.
\end{prop}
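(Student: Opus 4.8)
\textbf{Proof plan for Proposition \ref{hago}.}

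The plan is to reduce the statement to an application of Hagopian's fixed point theorem on the universal covering, following the structure of the proofs of Theorems \ref{t1} and \ref{t2}, but exploiting that the lift $K'=\Pi^{-1}(K)$ is now arcwise connected. First I would verify the claim stated just before the proposition: since $i_*:\pi_1(K)\to\pi_1(A)$ is surjective, $K$ contains a loop generating $\pi_1(A)$, so the lift of that loop is a proper embedded line $\ell\subset K'$ invariant under the deck translation $z\mapsto z+1$; any point of $K'$ can be joined to $\ell$ by an arc in $K$ lifted appropriately, hence $K'$ is arcwise connected. (The pathology in Figure \ref{figura4} is excluded precisely because there is a nontrivial loop.) I would also note, as in the earlier proofs, that $K'$ lies in a horizontal strip $|y|\le T$, that it separates $A'$ into an upper region $A'_N$ and a lower region $A'_S$, and that $\fil K'$ — the union of $K'$ with the bounded complementary components — is then an arcwise connected set lying in a strip, not separating the plane. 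The key point is that $\fil K'$ inherits arcwise connectedness: a bounded complementary component has its frontier in $K'$, so any point in it can be joined to $K'$, and then across $K'$.

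Next, for the case $d=-1$: let $F$ be a lift of $f$. Since $f(K)\subset K$ we have $F(K')\subset K'$, and since $F(\fil K')$ is a connected set not separating the plane whose only possible ``new'' components are bounded, $F(\fil K')\subset \fil K'$. Thus $F$ restricts to a continuous self-map of the compact, arcwise connected, non-separating plane continuum $\fil K'$; Hagopian's theorem then yields a fixed point of $F$ in $\fil K'$, hence — since this holds for every lift — Proposition \ref{comp} gives completeness. Actually for $d=-1$ the conclusion asked is only that $f$ itself has a fixed point in $\fil(K)$, so it suffices to apply this to one lift and project; the fixed point of $F$ projects to a fixed point of $f$ in $\fil(K)$.

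For the case $d<-1$ with $k$ odd: here $f^k$ again has degree $d^k<-1$, and a lift $G$ of $f^k$ satisfies $G(x+1,y)=G(x,y)+(d^k,0)$. The map $f^k$ still satisfies $f^k(K)\subset K$, so $G(\fil K')\subset \fil K'$ by the same non-separation argument, and Hagopian gives a fixed point of $G$ in $\fil K'$. Since this holds for \emph{every} lift $G$ of $f^k$, Proposition \ref{comp} applies to $f^k$ and shows $f^k$ has $|d^k-1|$ Nielsen classes of fixed points; taking $k$ odd and letting $k\to\infty$ yields $\#\fix(f^k)\ge |d^k-1|$, whence $\limsup_n \frac1n\ln\#\fix(f^n)\ge \ln|d|$, the growth rate inequality. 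I expect the main obstacle to be the verification that $\fil K'$ is genuinely arcwise connected and genuinely non-separating (so that Hagopian applies verbatim): one must argue carefully that filling in the bounded complementary components of the \emph{lifted} set does not destroy arcwise connectedness and does not create a separation of the plane, using that $K'$ sits in a strip and that its bounded complementary components are, individually, disks-like regions attached along subarcs of $K'$. Everything else is a routine packaging of the covering-space and Nielsen-theory machinery already set up in Section 2.
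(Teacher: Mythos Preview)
Your overall strategy — lift to the universal cover and invoke Hagopian's theorem — is the paper's strategy too, and your treatment of why $K'$ is arcwise connected is fine. The gap is elsewhere, and it is the heart of the matter.

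You assert that $\fil K'$ is a \emph{compact} plane continuum and apply Hagopian's theorem directly to $F|_{\fil K'}$. But $K'=\Pi^{-1}(K)$ is invariant under the deck translation $z\mapsto z+1$, hence unbounded; $\fil K'$ is therefore not compact and Hagopian's theorem (which concerns continua, i.e.\ compact connected sets) does not apply as you invoke it. This is not a cosmetic issue: if you try to repair it by taking a large compact piece of $\fil K'$, that piece is not $F$-invariant.

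The paper closes this gap by compactifying the strip $\R\times[0,1]$ with two points $-\infty,+\infty$ to obtain a closed disk $D$, after first modifying $F$ outside $\fil K'$ so that it extends continuously to $D$. The closure of $\fil K'$ in $D$ is now a genuine arcwise-connected plane continuum (once $D$ is embedded in the plane it does not separate, since the two complementary regions inside $D$ are joined through the exterior of $D$), and Hagopian applies. Here is where the hypothesis $d<0$, which you never actually use, becomes essential: because $F(x+1,y)=F(x,y)+(d^k,0)$ with $d^k<0$, the extended map \emph{interchanges} $-\infty$ and $+\infty$, so the fixed point supplied by Hagopian cannot be one of the two added points and must lie in $\fil K'$. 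For $d>0$ the extension would fix $\pm\infty$ and the argument would collapse — which is exactly why the proposition is stated only for negative degree.
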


Compare with Theorem \ref{t1} where it was proved that $f^{-1}(K) = K$ and $d>1$ implies completeness.

\begin{proof}
By hypothesis, $K'$ and $\fil(K')$ are arcwise connected. Take $F$ any lift of $f^k$.
As $k$ is odd, then $f^k$ has negative degree. One can modify $F$ outside $\fil(K')$ so that it can be
extended to $\R\times [0,1]$.  Let $D$ be the compactification of $\R\times [0,1]$ with two points $-\infty, +\infty$,
which is a topological closed disk. As
$d<0$, $F$ interchanges these two points and $\fil(K')\subset D$ is compact.  By Hagopian's Theorem \cite{h}, $F$ must have
a fixed point in $\fil(K')$, and this point does not belong to $\{-\infty, +\infty\}$.
It follows that $F$ has a fixed point in $\fil(K')$. Concluding, if $d<-1$ then every lift of $f^k$ with odd $k$ has a fixed point. This implies that $f^k$ has at least $|d^k-1|$ fixed points, so $f$ satisfies the growth rate inequality. For $d=-1$, just the existence of a
fixed point in $\fil(K)$ can be assured.
\end{proof}

As an application, we obtain a forcing result for $f: S^2\to S^2$: a  completely invariant orbit of period two implies that the growth inequality holds for $f$.

\begin{prop} Let $f: S^2\to S^2$ be a map of degree $d$, $|d|>1$.  Suppose that there exists $p,q \in S^2$ such that $f^{-1}(p) = \{q\}$ and
$f^{-1} (q) = \{p\}$.  Then, $f^k$ has at least $|d^k-1|$ fixed points for every odd $k$.
\end{prop}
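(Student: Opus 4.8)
The plan is to reduce the sphere statement to Theorem \ref{t3} by removing the orbit $\{p,q\}$ and viewing the complement as an annulus. First I would observe that $f^{-1}(p)=\{q\}$ and $f^{-1}(q)=\{p\}$ imply that $p$ and $q$ are totally invariant as a two-periodic set, so in particular $f(p)=q$ and $f(q)=p$: indeed $f(p)\in f(f^{-1}(q))\subset\{q\}$, and symmetrically. Consequently the restriction $g=f|_{S^2\setminus\{p,q\}}$ is a well-defined continuous self map of the open annulus $A=S^2\setminus\{p,q\}$, because no point of the annulus is mapped to $p$ or $q$ (their only preimages are $q$ and $p$ respectively, which lie outside $A$). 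The degree of $g$ as an annulus map equals $\pm d$ (up to sign depending on orientation conventions); in any case $|{\rm deg}(g)|=|d|>1$.

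Next I would check that $g$ extends continuously to the ends of $A$ with both ends attracting or both repelling. Since $f$ is continuous on $S^2$ and the orbit $\{p,q\}$ is two-periodic, the natural compactification of $A$ by its two ends is exactly $S^2$, and the extension of $g$ to the ends is $f$ itself with the ends identified to $p$ and $q$. Now $f^2(p)=p$ and $f^2(q)=q$, and since $f^{-1}(p)=\{q\}$, $f^{-1}(q)=\{p\}$, the point $p$ is an attractor for $f^2$ if and only if it is a repeller, according to the local behaviour of $f^2$ at $p$; the two ends always behave the same way because $f$ conjugates a neighbourhood of $p$ to a neighbourhood of $q$. So either both ends are attracting for the extended map or both are repelling. (If neither — i.e. the fixed points of $f^2$ at $p,q$ are of saddle/indifferent type — one can still perturb the metric near $p,q$, or argue directly, but the cleanest route is: the hypothesis $f^{-1}(p)=\{q\}$ forces a full neighbourhood of $p$ to map inside a neighbourhood of $q$ and conversely, which already gives the "attracting ends" configuration for $f$, hence for $g$.)

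Granting that, Theorem \ref{t3} applies to $g$ and yields that $g$ is complete; in particular, for every $k$, $g^k$ has $|{\rm deg}(g)^k-1|=|d^k-1|$ Nielsen classes of fixed points, hence at least $|d^k-1|$ fixed points in $A$. For odd $k$ the map $f^k$ has negative degree, $p$ and $q$ are interchanged by $f^k$ so they are not fixed points of $f^k$, and every fixed point of $g^k$ is a fixed point of $f^k$; therefore $f^k$ has at least $|d^k-1|$ fixed points, as claimed.

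The main obstacle is the second step: verifying that the hypothesis $f^{-1}(p)=\{q\}$, $f^{-1}(q)=\{p\}$ genuinely forces the ends of the annulus to be simultaneously attracting (or simultaneously repelling) for the extended map, rather than merely two-periodic. One has to argue that total invariance of the orbit $\{p,q\}$ under $f$ implies that small neighbourhoods of $p$ are carried into small neighbourhoods of $q$: if $U$ is a neighbourhood of $q$, then $S^2\setminus f^{-1}(U)$ is a compact set disjoint from $\{q\}=f^{-1}(U)\cap\{q\}$... more carefully, $f^{-1}(S^2\setminus U)$ is a closed set not containing $p$ (since $f(p)=q\in U$), so its complement is an open neighbourhood $V$ of $p$ with $f(V)\subset U$. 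This is exactly the statement that $p$ is an attracting end for $f$, and symmetrically for $q$; so condition (1) of Theorem \ref{t3} holds for $g$, and the proposition follows.
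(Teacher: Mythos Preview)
Your reduction to the annulus $A=S^2\setminus\{p,q\}$ is correct, and so is the observation that $f(p)=q$, $f(q)=p$. But the attempt to invoke Theorem~\ref{t3} fails at the second step, and this is a genuine gap, not a technicality.

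Theorem~\ref{t3} requires that the extension of $g$ to the two ends \emph{fixes} each end and that both are (topological) attractors, or both repellers: look at its proof, where one takes an essential curve $\alpha$ near $N$ with $f(\alpha)$ in the component of $A\setminus\alpha$ containing $N$. In your situation the extension sends the end $p$ to the end $q$ and vice versa, so neither end is even fixed by $g$. Your final paragraph only establishes that for every neighbourhood $U$ of $q$ there is a neighbourhood $V$ of $p$ with $f(V)\subset U$; that is nothing more than continuity of $f$ at $p$, and it is not the attracting-end condition. Nor can you rescue this by passing to $g^2$: take $f(z)=1/z^2$ on the Riemann sphere, where $f^{-1}(0)=\{\infty\}$ and $f^{-1}(\infty)=\{0\}$, yet $f^2(z)=z^4$ has $0$ attracting and $\infty$ repelling, so Theorem~\ref{t3} does not apply to $g^2$ either.

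The paper's argument is different and uses exactly the feature you tried to sidestep: since $f$ \emph{interchanges} the ends, the restriction $g$ satisfies the hypothesis of item~(2) in Remark~\ref{cambia}. For odd $k$, any lift $G$ of $g^k$ to the universal cover sends $\{y=R\}$ into $\{y\le 0\}$ and $\{y=-R\}$ into $\{y\ge 0\}$ (because the ends are swapped), while $|d|^k>1$ forces the vertical sides of a large rectangle to be pushed outward (or across, if $d<0$). Lemma~\ref{rectangulo} or Remark~\ref{rk1} then gives $I_G(\partial R)=\pm 1$, hence $G$ has a fixed point, and Proposition~\ref{comp} yields the $|d^k-1|$ Nielsen classes.
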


\begin{proof}
The restriction of $f$ to $S^2\setminus\{p,q\}$ is a degree $d$ map of the annulus satisfying the hypothesis of the second assertion in
Remark \ref{cambia} after the proof of Theorem \ref{t1}.
\end{proof}

Another result for the sphere:

\begin{prop}  Let $f: S^2\to S^2$ be a map of degree $d>1$.  Assume that there exists two disjoint simply connected nonempty open
sets $U_1$ and $U_2$ such that $f^{-1}(U_i)= U_i$ for $i=1, 2$. Then, $f$ has at least the same number of periodic points as $z^d$ on $S^1$.
\end{prop}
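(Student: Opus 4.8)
The plan is to deduce the statement from Theorem \ref{t1}, applied to a continuum which $f$ carries to itself and to a suitable modification of $f$. Recall that ``$f$ has at least as many periodic points as $z^d$ on $S^1$'' means $\#\fix(f^n)\ge d^n-1$ for every $n$, and that $f$, having nonzero degree, is onto. Put $K=S^2\setminus(U_1\cup U_2)$. From $f^{-1}(U_i)=U_i$ one gets $f^{-1}(K)=K$, and, $f$ being surjective, also $f(U_i)=f(f^{-1}(U_i))=U_i$ and $f(K)=K$. Since $U_1,U_2$ are disjoint, open and simply connected, $K$ is a nondegenerate continuum whose complement in $S^2$ is exactly $U_1\sqcup U_2$: connectedness comes from \v Cech--Alexander duality, $\tilde{\check H}{}^{0}(K)\cong \tilde H_1(S^2\setminus K)=\tilde H_1(U_1)\oplus\tilde H_1(U_2)=0$, and $K$ is neither empty nor a point because $S^2$ and $S^2$ minus a point are connected. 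Fix $p_i\in U_i$ and let $A=S^2\setminus\{p_1,p_2\}$, an open annulus; then $K\subset A$ is an essential continuum, its two complementary components $U_1\setminus\{p_1\}$ and $U_2\setminus\{p_2\}$ carrying the two ends of $A$. Two remarks will be used: $f|_{U_i}\colon U_i\to U_i$ is proper (the preimage of a compact subset of $U_i$ is closed in $S^2$ and contained in $U_i$), and, since $f^{-1}(q)\subset U_i$ for every $q\in U_i$ and $U_i$ is connected, $f|_{U_i}$ is a proper map of degree $d$.

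The obstruction to applying Theorem \ref{t1} directly is that $f$ need not map $A$ into itself; I would remove it by modifying $f$ away from $K$. For each $i$ choose a connected compact neighbourhood $N_i\subset U_i$ of the compact set $f^{-1}(p_i)\cup\{p_i\}$, with $\partial N_i$ a finite union of circles, and replace $f$ on $N_i$ by a continuous map $g_i\colon \overline{N_i}\to U_i$ that agrees with $f$ on $\partial N_i$ and has $p_i$ as its only preimage of $p_i$. Such a $g_i$ exists: deleting a small disc $B$ about $p_i$, the winding of $f|_{\partial N_i}$ about $p_i$ equals $\deg(f,N_i,p_i)=d$, so after prescribing $\varphi\colon\partial B\to U_i\setminus\{p_i\}$ with winding $d$ the map $f|_{\partial N_i}\cup\varphi$ extends over the planar surface $\overline{N_i}\setminus \mathrm{int}\,B$ into $U_i\setminus\{p_i\}$, and one then fills $B$ by a model map $z\mapsto p_i+\varepsilon(z-p_i)^d$ in a chart. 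Define $g\colon A\to A$ to be $f$ on $K$ and on each $U_i\setminus N_i$, and $g_i$ on $\overline{N_i}$. Then $g$ equals $f$ on the neighbourhood $S^2\setminus(N_1\cup N_2)$ of $K$, hence is continuous; $g(U_i)\subset U_i$, so $g^{-1}(K)=K$; $g^{-1}(p_i)=\{p_i\}$, so $g(A)\subset A$; $g$ coincides with $f$ on an essential loop of $A$ running in $U_1$ close to $\partial U_1$ (freely homotopic in $U_1$ to $\partial U_1$, along which $f$ winds $d$ times about $p_1$), so $\deg(g)=d$; and $g|_{U_i}$ is proper of nonzero degree, hence onto $U_i$, which with $g(K)=K$ makes $g$ surjective.

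Now Theorem \ref{t1} applies to $g$: it is a surjective degree $d$ map of the annulus $A$ with $d>1$ and $K$ an essential continuum with $g^{-1}(K)=K$, so $g$ is complete on $\fil(K)$. The complementary components $U_i\setminus\{p_i\}$ of $K$ in $A$ are unbounded, so $\fil(K)=K$; hence for every $n$ the map $g^n$ has at least $|d^n-1|$ fixed points, all lying in $K$. Since $g=f$ on $K$ and $K$ is invariant under both maps, $g^n|_K=f^n|_K$, so these are fixed points of $f^n$. Therefore $\#\fix(f^n)\ge d^n-1$ for every $n$, which is the assertion. The one genuinely delicate step is the construction of $g$ in the second paragraph — arranging that $p_i$ is the only preimage of $p_i$ without disturbing $f$ near $K$ and without changing the degree; everything else is routine bookkeeping.
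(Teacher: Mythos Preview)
Your proof is correct and follows essentially the same route as the paper: set $K=S^2\setminus(U_1\cup U_2)$, modify $f$ inside each $U_i$ to a map $g$ with $g^{-1}(p_i)=\{p_i\}$ while leaving $f$ unchanged near $K$, apply Theorem~\ref{t1} to $g$ on the annulus $S^2\setminus\{p_1,p_2\}$, and transfer the periodic points back to $f$ via $g|_K=f|_K$. The paper simply asserts the existence of the modified map $f'$; you supply the details (degree bookkeeping on $\partial N_i$, the obstruction argument for extending into $U_i\setminus\{p_i\}$, the model $z\mapsto z^d$ on the small disc) and also justify that $K$ is connected via Alexander duality, which the paper takes for granted. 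One small wording issue: your sentence about an essential loop ``freely homotopic in $U_1$ to $\partial U_1$'' is imprecise since $\partial U_1$ need not be a curve, but the intended conclusion $\deg(g)=d$ is correct and follows more cleanly from the fact that $g$ is homotopic to $f$ on $S^2$ (they agree outside $N_1\cup N_2$ and map each $N_i$ into the contractible $U_i$).
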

\begin{proof}
Let $K$ be the complement of $U_1\cup U_2$ in $S^2$. Then $K$ is a completely invariant continuum. For $i=1, 2$ choose points $p_i\in U_i$.
Let $f'$ be a map homotopic to $f$ such that $f=f'$ in a neighborhood of $K$ and such that $(f')^{-1}(p_i)=\{p_i\}$ for $i=1, 2$.
Then $f'$ is a degree $d$ map of the annulus $S^2\setminus\{p_1,\ p_2\}$ and has a completely invariant essential continuum $K$. By Theorem \ref{t1} $f'$ is complete on $K=\fil(K)$, so it has at least the same number of periodic points as $z^d$ on $S^1$. The conclusion follows because $f=f'$ on $K$.
\end{proof}


\begin{thebibliography}{Poinc}

\bibitem[H]{h} {\ C. Hagopian.}{\ The fixed-point property for simply connected plane continua.}{\ Trans. Amer. Math. Soc. 348 (1996), no. 11, 45254548. }


\bibitem[IPRX]{iprx} {\ J. Iglesias, A. Portela, A. Rovella. and J. Xavier.}{%
\ Dynamics of annulus maps: periodic points for coverings }{\ http://arxiv.org/pdf/1411.5565.pdf}

\bibitem[J]{jiang} {Jiang, Bo Ju.} {Lectures on Nielsen fixed point theory.} {Contemporary Mathematics, 14. American Mathematical Society, Providence, R.I., 1983. vii+110 pp. ISBN: 0-8218-5014-8 }


\bibitem[Mo]{mo} {R.L. Moore}{ Fundations of point set theory}
{Rev. ed. AMS Colloq. Publ. Vol13 Providence RI}{} {1962}


\bibitem[S]{shub2} {M.Shub.} {\ All, most, some differentiable dynamical
systems.} {Proceedings of the International Congress of Mathematicians,
Madrid, Spain, European Math. Society.} (2006), 99--120.
\end{thebibliography}
\end{document}